\providecommand{\U}[1]{\protect\rule{.1in}{.1in}}
\newtheorem{theorem}{Theorem}
\theoremstyle{plain}
\newtheorem{corollary}{Corollary}
\newtheorem{definition}{Definition}
\newtheorem{lemma}{Lemma}
\numberwithin{equation}{section}
\begin{document}
\title[ ]{The New Mixed Hypoexponential-G Family}
\author{Therar Kadri}
\address[T. Kadri]{ Northwestern Polytechnic}
\email{TKadri@nwpolytech.ca}
\author{Amina Halat}
\address[A. Halat]{ Lebanese International University}
\email{aminahalat1997@gmail.com}

\thanks{This paper is in final form and no version of it will be submitted for
publication elsewhere.}
\date{Nov 9, 2022}
\subjclass[2010]{Primary62E15, 60E10; Secondary, 60E05}
\keywords{Hypoexponential Distribution; Probability Density Function; Cumulative Density Function; Moment Generating Function; Hazard Rate Function; Reliability Function; Moment of Order k; Maximum Likelihood Estimation; Method of Moment}

\begin{abstract}
Through viewing out the literature, many generated distributions took a new
special form of probability density function (PDF) in which it is written as a
linear combination of n other distributions. Therefore, we define in this
paper a new type of distributions called "The New Mixed Distribution" form in
which a distribution is written as a linear combination of n others and derive
its characteristics. Second we construct "The New Mixed T-G family" a family
of distributions following another new defined type "The New Mixed T-G
Distribution". Third, we generate "The New Mixed Hypoexponential G-Family" out
of 6 new mixed distributions with characterizing their PDFs, CDFs, hazard rate
and reliability functions, MGF, and nth moment, and studying their maximum
likelihood estimator and method of moments.

\end{abstract}
\maketitle

\section{Introduction}

The Hypoexponential distribution is a continuous distribution that interferes
in everyday events by playing an important role in several fields such as
queuing theory, telegraphic engineering, and stochastic processes. It is named
Hypoexponential distribution as its coefficient is less than one and obtained
by adding $n$ $\geq2$ independent Exponential random variables. As a result,
the PDF of Hypoexponential distribution takes a special form that is a
summation of n Exponential distributions. See \cite{9}.

In the same manner, statisticians reached the same special form of
distributions when generating the PDF of ratio of 2 Hypoexponential
distribution (2014), PDF of summation of Extension Exponential distribution by
Kadri et al. (2022), See \cite{4}.

Based on what proceeds, in this paper, three are defined: New Mixed
Distribution form, New Mixed T-G Family, and the New Mixed Hypoexponential G-Family.

This paper introduces the new type of distributions "New Mixed Distribution"
in which a distribution PDF is written as a linear combination of n other
known distributions. Then, we derive the general form of CDF, hazard rate and
reliability functions, moment generating function, and moment of order k, and
generalize expressions for maximum likelihood estimator and method of moments
of these distributions. Next, we construct the "New Mixed T-G Distribution" a
type of distributions that follow the New Mixed Distribution form, with some
specific characteristics. Out of previous, we construct a new family of
distributions named the New Mixed T-G family and set its first example "The
New Mixed Hypoexponential G-Family" which consists of the Mixed
Hypoexponential Weibull distribution, Mixed Hypoexponential Frechet
distribution, Mixed Hypoexponential Pareto distribution, Mixed Hypoexponential
Power distribution, Mixed Hypoexponential Gumbel distribution, and Mixed
Hypoexponential Extreme Value distribution. Finally, the PDF with the
previously mentioned properties of these distributions are found in a direct
defined manner according to the properties of the New Mixed Distribution form.

\section{Some Preliminaries}

\subsection{Hypoexponential Distribution}

Hypoexponential Distribution is a continuous distribution. If a random
variable $X\sim Hypo\exp(\lambda_{1},\lambda_{2},...,\lambda_{n})$ then
$X=\sum\limits_{i=1}^{n}X_{i}$ where $X_{i}\sim Exp(\lambda_{i})\ $for
$i=1,2,...,n.$

\begin{theorem}
\label{difexp}Let $X_{i}$ $\sim Exp(\alpha_{i})$, $i=1,2,...,n$ be independent
random variables with $\alpha_{i}\neq\alpha_{j}$. Then $S_{n}=\sum
\limits_{i=1}^{n}X_{i}\sim hypo\exp(\overrightarrow{\alpha})$, where
$\overrightarrow{\alpha}=(\alpha_{1},\alpha_{2},...,\alpha_{n})$ has PDF%
\[
f_{S_{n}}(t)=\sum\limits_{i=1}^{n}\frac{f_{X_{i}}(t)}{P_{i}},\ t>0
\]
where $P_{i}=\prod\limits_{j=1,j\neq i}^{n}(1-\frac{\alpha_{i}}{\alpha_{j}}).$
\end{theorem}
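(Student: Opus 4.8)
The plan is to compute the PDF of the sum $S_n = \sum_{i=1}^n X_i$ using the moment generating function (MGF) approach, since the $X_i$ are independent, making the MGF of the sum the product of individual MGFs, and then to recover the density via partial fraction decomposition and inverse transform.

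Let me sketch the key steps. First, recall that for $X_i \sim \mathrm{Exp}(\alpha_i)$ the MGF is $M_{X_i}(s) = \alpha_i/(\alpha_i - s)$ for $s < \alpha_i$. By independence, the MGF of $S_n$ is the product $M_{S_n}(s) = \prod_{i=1}^n \alpha_i/(\alpha_i - s)$. Second, since the $\alpha_i$ are pairwise distinct, this rational function in $s$ has $n$ simple poles, so I can write it as a partial fraction expansion $M_{S_n}(s) = \sum_{i=1}^n C_i\,\alpha_i/(\alpha_i - s)$ for suitable constants $C_i$. Third, by the uniqueness of the MGF and linearity of the inverse transform, the term $C_i\,\alpha_i/(\alpha_i - s)$ inverts to $C_i\, f_{X_i}(t) = C_i\,\alpha_i e^{-\alpha_i t}$, giving $f_{S_n}(t) = \sum_{i=1}^n C_i\, f_{X_i}(t)$. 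The remaining task is to identify the constants and show $C_i = 1/P_i$ with $P_i = \prod_{j\neq i}(1 - \alpha_i/\alpha_j)$.

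The heart of the argument is the partial-fraction coefficient computation. Using the standard cover-up (residue) method, the coefficient of the factor $1/(\alpha_i - s)$ is obtained by multiplying $M_{S_n}(s)$ by $(\alpha_i - s)$ and evaluating at $s = \alpha_i$. Carrying this out,
\[
C_i\,\alpha_i = \lim_{s\to\alpha_i}(\alpha_i - s)\prod_{k=1}^n \frac{\alpha_k}{\alpha_k - s} = \alpha_i\prod_{k\neq i}\frac{\alpha_k}{\alpha_k - \alpha_i},
\]
so that $C_i = \prod_{k\neq i}\alpha_k/(\alpha_k-\alpha_i)$. I would then simplify this product by factoring $\alpha_k$ out of each denominator: $\alpha_k - \alpha_i = \alpha_k(1 - \alpha_i/\alpha_k)$, which cancels the $\alpha_k$ in the numerator and leaves $C_i = \prod_{k\neq i} 1/(1 - \alpha_i/\alpha_k) = 1/P_i$, exactly as claimed.

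I expect the main obstacle to be bookkeeping rather than any deep difficulty: one must track the algebra of the product carefully to see the cancellation of the $\alpha_k$ factors and to confirm the indexing matches the definition $P_i = \prod_{j\neq i}(1-\alpha_i/\alpha_j)$. An alternative route would be an induction on $n$ using the convolution formula for densities of sums of independent variables, but that requires a somewhat intricate inductive step to re-collect the exponential terms after each convolution, so the MGF–partial-fraction method is cleaner. One should also note the technical point that the MGF determines the distribution uniquely on a neighborhood of $s=0$ (justifying the term-by-term inversion), and verify that the support is $t>0$, which follows since each $X_i$ is nonnegative.
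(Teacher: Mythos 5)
Your proposal is correct, but note that the paper itself does not prove this statement at all: Theorem \ref{difexp} is quoted as a preliminary, with the proof deferred to the cited reference of Smaili, Kadri and Kadry (2013). So there is no internal proof to compare against; your MGF--partial-fraction argument is the standard route to this result, and your residue computation is right: multiplying $\prod_{k}\alpha_k/(\alpha_k-s)$ by $(\alpha_i-s)$ and letting $s\to\alpha_i$ gives $C_i=\prod_{k\neq i}\alpha_k/(\alpha_k-\alpha_i)=\prod_{k\neq i}\bigl(1-\alpha_i/\alpha_k\bigr)^{-1}=1/P_i$. One technical point deserves sharpening. The coefficients $1/P_i$ need not be positive (if $\alpha_i>\alpha_j$ for some $j$ then the factor $1-\alpha_i/\alpha_j$ is negative), so $\sum_i C_i f_{X_i}$ is a \emph{signed} combination of densities, not a mixture; this is exactly why the paper introduces its ``new mixed'' form with $A_i\in\mathbb{R}$. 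Consequently you cannot invoke the usual uniqueness theorem for MGFs of probability distributions term by term, since the individual summands $C_i\,\alpha_i/(\alpha_i-s)$ are not MGFs of anything when $C_i<0$. What you actually need is injectivity of the Laplace transform on integrable functions supported on $(0,\infty)$ (Lerch's theorem): both $f_{S_n}$ and $g(t)=\sum_i C_i\alpha_i e^{-\alpha_i t}$ are integrable on $(0,\infty)$ and have the same transform on $s<\min_i\alpha_i$, hence agree almost everywhere. With that substitution your argument is complete; the alternative induction-on-$n$ convolution proof you mention would also work and avoids transform theory entirely, at the cost of a messier inductive recombination of the coefficients.
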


Next is the graph of PDF of Hypoexponential distribution for some values of k
and $\lambda_{i}$.%
%TCIMACRO{\FRAME{ftbpFU}{5.127in}{1.582in}{0pt}{\Qcb{PDF and CDF of different
%distributions of hypoexp($\alpha_{1},\alpha_{2},\alpha_{3},\alpha_{4})$}}%
%{}{Figure}{\special{ language "Scientific Word";  type "GRAPHIC";
%maintain-aspect-ratio TRUE;  display "USEDEF";  valid_file "T";
%width 5.127in;  height 1.582in;  depth 0pt;  original-width 11.821in;
%original-height 3.6278in;  cropleft "0";  croptop "1";  cropright "1";
%cropbottom "0";  tempfilename '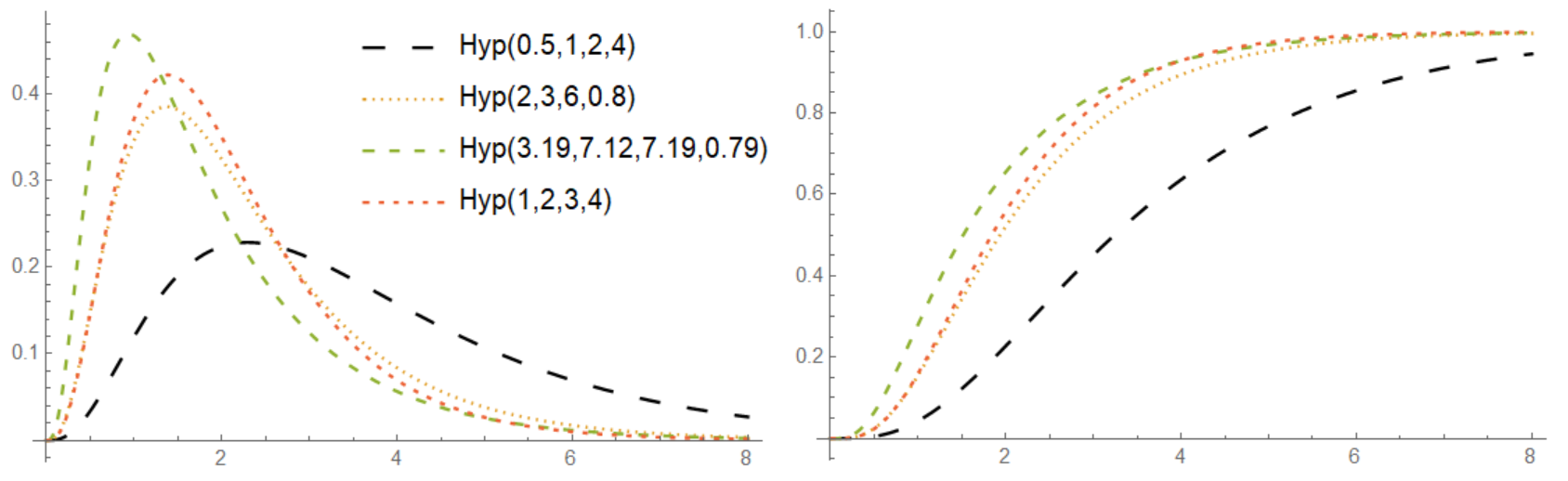';tempfile-properties "XPR";}}}%
%BeginExpansion
\begin{figure}
[ptb]
\begin{center}
\includegraphics[width=5in]%
{}%
\caption{PDF and CDF of different distributions of hypoexp($\alpha_{1}%
,\alpha_{2},\alpha_{3},\alpha_{4})$}%
\end{center}
\end{figure}
%EndExpansion

\subsection{Definitions of some Random Variables}

Next, we present 6 different random variables with their corresponding PDF,
CDF, MGF, Reliability and Hazard Rate functions, and moment of order k.

\begin{definition}
The Weibull distribution is a continuous probability distribution. Let $X\sim
Weibull(\lambda,k,\gamma)$, then $X$ has the following PDF
\[
f(t)=\frac{\lambda}{k}(\frac{t-\gamma}{k})^{\lambda-1}e^{-(\frac{t-\gamma}%
{k})^{\lambda}}%
\]

where $f(t)>0$, $\lambda>0\ $is shape parameter, $k>0\ $is$\ $%
scale\ parameter, $-\infty<\gamma<\infty\ $is location parameter. The
2-parameter Weibull PDF is obtained by setting $\gamma=0$ and is given by%
\[
f(t)=\frac{\lambda}{k}(\frac{t}{k})^{\lambda-1}e^{-(\frac{t}{k})^{\lambda}}%
\]

with the following characteristics%
\[%
\begin{tabular}
[c]{ll}%
$F(t)=(1-e^{-(\frac{t}{k})^{\lambda}}),$ & $R(t)=e^{-(\frac{t}{k})^{\lambda}}%
$\\
$h(t)=\frac{\lambda}{k}\left(  \frac{t}{k}\right)  ^{\lambda-1},$ &
$\Phi(t)=\sum\limits_{n=0}^{\infty}\frac{t^{n}k^{n}}{n!}\Gamma(\frac
{1+n}{\lambda})$\\
$E[X^{n}]=k^{n}\Gamma(1+\frac{k}{\lambda}).$ &
\end{tabular}
\]
where $\Gamma\left(  s\right)  $ is Gamma distribution defined as
$\Gamma\left(  s\right)  =\int\limits_{0}^{\infty}t^{s-1}e^{-t}dt.\ $See
\cite{7}.
\end{definition}

\begin{definition}
The Frechet distribution is also known as Inverse Weibull distribution. Let
$X\sim Frechet(k,\lambda,\gamma)$, then $X$ has the following PDF
\[
f(t)=\frac{k}{\lambda}\left(  \frac{t-\gamma}{\lambda}\right)  ^{-1-k}%
e^{-\left(  \frac{t-\gamma}{\lambda}\right)  ^{-k}}%
\]

where $\lambda>0\ $is scale parameter, $k>0\ $is$\ $shape\ parameter,
$-\infty<\gamma<\infty\ $is location parameter. Taking the location parameter
$\gamma=0$ then the PDF of 2-parameter Frechet distribution is given by
\[
f(t)=\frac{k}{\lambda}\left(  \frac{t}{\lambda}\right)  ^{-1-k}e^{-\left(
\frac{t}{\lambda}\right)  ^{-k}}%
\]

and the CDF, Reliability Function, MGF, and nth moment are as the following%
\[%
\begin{tabular}
[c]{ll}%
$F(t)=e^{-\left(  \frac{t}{\lambda}\right)  ^{-k}},$ & $R(t)=e^{-k\left(
e^{\left(  \frac{k}{t}\right)  ^{\lambda}-1}\right)  ^{-\lambda}}$\\
$\Phi(t)=\sum\limits_{m=0}^{\infty}\frac{k^{m}t^{m}}{m!}\Gamma(\lambda
_{i}-\frac{m}{\lambda_{i}}),$ & $E[X^{n}]=\Gamma(1-\frac{n}{k}).$%
\end{tabular}
\]
for $\Gamma\left(  s\right)  =\int\limits_{0}^{\infty}t^{s-1}e^{-t}dt.$See
\cite{5}.
\end{definition}

\begin{definition}
Consider the random variable $X$ following Pareto distribution, then $X\sim
pareto(k,\lambda)$ of first kind has the following PDF
\[
f(t)=\frac{k\lambda^{k}}{t^{k+1}}%
\]

where $k>0$ is shape parameter and $\lambda>0$ is scale parameter. its
properties are given as%
\[%
\begin{tabular}
[c]{ll}%
$F(t)=(1-(\frac{\lambda}{t})^{k}),$ & $R(t)=\left(  \frac{\lambda}{t}\right)
^{k}$\\
$\Phi(t)=(k(-\lambda t)^{k}\Gamma(-k,-\lambda t),$ & $E[X^{n}]=\lambda
^{n}\frac{k}{k-n}.$%
\end{tabular}
\]
where $\Gamma\left(  n,\theta t\right)  $ is the Upper incomplete Gamma
function defined as $\Gamma\left(  n,\theta t\right)  =(n-1)!\sum
\limits_{k=0}^{n-1}\frac{(\theta t)^{k}}{k!}e^{-\theta t}.\ $See \cite{8}.
\end{definition}

\begin{definition}
Let $X$ follows Power distribution i.e. $X\sim power(k,\lambda)$ where k is
the domain parameter and $\lambda$ is the shape parameter. Its PDF is given
as
\[
f(t)=\lambda\ k^{\lambda}t^{-1+\lambda}%
\]

whereas its CDF, reliability function, MGF, nth moment are given as%
\[%
\begin{tabular}
[c]{ll}%
$F(t)=\left\{
\begin{tabular}
[c]{ll}%
$(\frac{t}{k})^{\lambda}$ & $0<t<\frac{1}{k}$\\
$1$ & $t>\frac{1}{k}$%
\end{tabular}
\right.  ,$ &
\begin{tabular}
[c]{ll}%
$R(t)=k^{\lambda}t^{\lambda}$ & $for\ 0<t\leq\frac{1}{k}$%
\end{tabular}
\\
$\Phi(t)=\frac{-\lambda\ k^{\lambda}}{\left(  -t\right)  ^{\lambda}}%
[\Gamma\left(  \lambda,\frac{-t}{k}\right)  -\Gamma\left(  \lambda\right)  ],
$ & $E[X^{n}]=k^{-n}\frac{\lambda}{n+\lambda}.$%
\end{tabular}
\]
where $\Gamma\left(  n,\theta t\right)  $ is the Upper incomplete Gamma
function defined as $\Gamma\left(  n,\theta t\right)  =(n-1)!\sum
\limits_{k=0}^{n-1}\frac{(\theta t)^{k}}{k!}e^{-\theta t}.$
\end{definition}

\begin{definition}
Consider the Gumbel distribution or Generalized Extreme Value distribution
Type-1, let $X\sim Gumbel(k,\lambda)$ then $X$ has the following PDF
\[
f(t)=\frac{1}{\lambda}e^{\frac{t-k}{\lambda}-e^{\frac{t-k}{\lambda}}}%
\]

where $\lambda>0$ is scale parameter and $k\in%
%TCIMACRO{\U{211d} }%
%BeginExpansion
\mathbb{R}
%EndExpansion
$ is location parameter with the following properties%
\[%
\begin{tabular}
[c]{ll}%
$F(t)=e^{-e^{\frac{(t-k)}{\lambda}}},$ & $R(t)=(-e^{-e^{\frac{(t-k)}{\lambda}%
}})$\\
$\Phi(t)=\Gamma(1-\lambda t)e^{kt},$ & $E[X^{n}]=\int\limits_{R}\frac
{1}{\lambda}t^{n}e^{(\frac{t-k}{\lambda}-e^{\frac{t-k}{\lambda}})}dt.$%
\end{tabular}
\]
for $\Gamma\left(  s\right)  =\int\limits_{0}^{\infty}t^{s-1}e^{-t}dt.$ See
\cite{2}.
\end{definition}

\begin{definition}
Let X follows the Extreme Value distribution i.e. $X\sim$Extreme Value
Distribution $(k,\lambda)$ then it has the following PDF%
\[
f(t)=\frac{e^{\left(  -e^{\frac{-t+k}{\lambda}}+\frac{-t+k}{\lambda}\right)
}}{\lambda}%
\]

where $\lambda>0$ is scale parameter and $k\in%
%TCIMACRO{\U{211d} }%
%BeginExpansion
\mathbb{R}
%EndExpansion
$ is location parameter. Its CDF, Reliability function, MGF, and nth moment
are as follows%
\[%
\begin{tabular}
[c]{ll}%
$F(t)=e^{-e^{-\frac{(t-k)}{\lambda}}},$ & $R(t)=\left(  1-e^{-e^{-\frac
{(t-k)}{\lambda}}}\right)  $\\
$\Phi(t)=\int e^{xt}\frac{e^{\left(  -e^{\frac{-x+k}{\lambda}}+\frac
{-x+k}{\lambda}\right)  }}{\lambda}dx,$ & $E[X^{n}]=\int\limits_{-\infty
}^{\infty}t^{n}\frac{e^{\left(  -e^{\frac{-t+k}{\lambda}}+\frac{-t+k}{\lambda
}\right)  }}{\lambda}dt.$%
\end{tabular}
\]

\end{definition}

\section{The New Mixed Distribution}

In this section we propose a new type of distributions called The New Mixed
Distribution in which a distribution is written as a linear combination of n
others. In addition, we derive the different properties for this type such as
PDF, CDF, MGF, nth moment, reliability and hazard rate functions, and also we
discuss some parameter estimations.

\begin{definition}
\label{mix}Let $X_{i},\ i\in I\subset%
%TCIMACRO{\U{2115} }%
%BeginExpansion
\mathbb{N}
%EndExpansion
$ be vector of random variables that follow a distribution. A distribution Y
is called a Mixed Distribution of $X_{i}$ if it has PDF of the form
\[
f_{Y}(t)=\sum\limits_{i\in I}A_{i}f_{X_{i}}(t)
\]
where $A_{i}$ $\in%
%TCIMACRO{\U{211d} }%
%BeginExpansion
\mathbb{R}
%EndExpansion
\ $and$\ \sum\limits_{i\in I}A_{i}=1$. Whenever $A_{i}\geq0$ this distribution
is the known mixture distribution of the random variables $X_{i} $ where
$A_{i}$ is the weight probability distribution along $X_{i}$. However, our new
distribution represents a general expression having $A_{i}$ $\in%
%TCIMACRO{\U{211d} }%
%BeginExpansion
\mathbb{R}
%EndExpansion
.$
\end{definition}

\subsection{Properties of New Mixed distribution}

Regarding this special form of PDF we are able to generalize the CDF, the
reliability and hazard rate functions, MGF and nth moment of each distribution
having the New Mixed Distribution form.

\begin{theorem}
\label{cdf}Let $X_{i},$ $i\in I\subset%
%TCIMACRO{\U{2115} }%
%BeginExpansion
\mathbb{N}
%EndExpansion
$ follows a known distribution and given a new distribution Y such that
\[
f_{Y}(t)=\sum\limits_{i\in I}A_{i}f_{X_{i}}(t)
\]
then its CDF is given as%
\[
F_{Y}(t)=\sum\limits_{i\in I}A_{i}F_{X_{i}}(t).
\]

\end{theorem}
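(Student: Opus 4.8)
The plan is to compute $F_Y$ directly from its defining integral representation in terms of the density $f_Y$, substitute the mixed-distribution form supplied by Definition \ref{mix}, and then move the coefficients $A_i$ and the summation outside the integral so as to recognize each resulting inner integral as the CDF $F_{X_i}$ of the corresponding component. In other words, the whole argument is an application of the linearity of integration to the special additive structure of $f_Y$.

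Concretely, I would begin from the identity $F_Y(t)=\int_{-\infty}^{t}f_Y(u)\,du$, valid by the definition of the cumulative distribution function of a continuous distribution. Inserting $f_Y(u)=\sum_{i\in I}A_i f_{X_i}(u)$ yields
\[
F_Y(t)=\int_{-\infty}^{t}\sum_{i\in I}A_i f_{X_i}(u)\,du .
\]
The heart of the argument is then to interchange the summation and the integral, producing $F_Y(t)=\sum_{i\in I}A_i\int_{-\infty}^{t}f_{X_i}(u)\,du$. Since $\int_{-\infty}^{t}f_{X_i}(u)\,du=F_{X_i}(t)$ for each $i$ by definition of $F_{X_i}$, the claimed formula $F_Y(t)=\sum_{i\in I}A_i F_{X_i}(t)$ follows at once.

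The one point that genuinely needs care, and that I expect to be the main obstacle, is the justification of the exchange of $\sum_{i\in I}$ and $\int$. When the index set $I$ is finite this is merely the linearity of the integral and needs no comment. When $I$ is countably infinite one must appeal to a term-by-term integration theorem: interpreting the sum as an integral against counting measure, I would invoke Tonelli's theorem on the nonnegative densities $f_{X_i}\ge 0$ (each normalized to integrate to $1$) and then Fubini on the signed coefficients $A_i$, under the mild standing assumption that $\sum_{i\in I}|A_i|<\infty$. This absolute summability guarantees that the double sum/integral is absolutely convergent, so the order of $\sum$ and $\int$ may be swapped freely on every ray $(-\infty,t]$.

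As a consistency check that the resulting $F_Y$ is a legitimate CDF, I would note that letting $t\to\infty$ gives $\sum_{i\in I}A_i F_{X_i}(\infty)=\sum_{i\in I}A_i\cdot 1=1$, using the normalization $\sum_{i\in I}A_i=1$ from Definition \ref{mix}; monotonicity and right-continuity are likewise inherited termwise from the $F_{X_i}$. This confirms the formula is not only formally correct but also produces a bona fide distribution function.
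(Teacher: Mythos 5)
Your proposal is correct and follows the same core route as the paper's own proof: write $F_Y(t)=\int_{-\infty}^{t}f_Y(u)\,du$, substitute the mixed form, interchange $\sum_{i\in I}$ with the integral, and identify each inner integral as $F_{X_i}(t)$. Two differences are worth noting. First, the paper's proof also treats the case where $Y$ is \emph{discrete}, replacing the integral by $\sum_{x\le t}$ and swapping the two summations; you restrict attention to the continuous case from the outset, so your argument covers only half of what the paper proves (though the continuous case is the only one used later in the paper). Second, you supply something the paper omits entirely: a justification for the interchange when $I$ is countably infinite. Since Definition \ref{mix} allows $A_i\in\mathbb{R}$ of arbitrary sign, the swap is not automatic, and your appeal to Tonelli on the nonnegative $f_{X_i}$ followed by Fubini under the standing hypothesis $\sum_{i\in I}|A_i|<\infty$ is a genuine strengthening — the paper performs the interchange silently, which is a real (if standard) gap for infinite $I$ with signed coefficients. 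Your closing consistency check ($t\to\infty$ recovering $\sum_i A_i=1$) mirrors the paper's later lemma on the normalization of the $A_i$, run in the opposite direction, and is a nice sanity check though not logically required for the theorem.
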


\begin{proof}
Let $f_{Y}(t)=\sum\limits_{i\in I}A_{i}f_{X_{i}}(t)$, where $Y$ is a
continuous distribution then its CDF is gives as
\[
F_{Y}(t)=\int\limits_{-\infty}^{t}f_{Y}(x)dx=\int\limits_{-\infty}^{t}%
\sum\limits_{i\in I}A_{i}f_{X_{i}}(x)dx=\sum\limits_{i\in I}A_{i}%
\int\limits_{-\infty}^{t}f_{X_{i}}(x)dx=\sum\limits_{i\in I}A_{i}F_{X_{i}}(t)
\]
Now, suppose that Y is a discrete distribution then%
\[
F_{Y}(t)=\sum\limits_{x\leq t}f_{Y}(x)=\sum\limits_{x\leq t}\sum\limits_{i\in
I}A_{i}f_{X_{i}}(x)=\sum\limits_{i\in I}A_{i}\sum\limits_{x\leq t}f_{X_{i}%
}(x)=\sum\limits_{i\in I}A_{i}F_{X_{i}}(t).
\]

\end{proof}

\begin{theorem}
\label{reliability}Let $X_{i},$ $i\in I\subset%
%TCIMACRO{\U{2115} }%
%BeginExpansion
\mathbb{N}
%EndExpansion
$ follows a known distribution and given a new distribution Y such that
\[
f_{Y}(t)=\sum\limits_{i\in I}A_{i}f_{X_{i}}(t)
\]
then its reliability function is given as%
\[
R_{Y}(t)=\sum\limits_{i\in I}A_{i}R_{X_{i}}(t).
\]

\end{theorem}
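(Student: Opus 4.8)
The plan is to prove the reliability statement by reducing it to the cumulative distribution function result already established in Theorem \ref{cdf}. Recall that the reliability function of any distribution is defined as the complement of its CDF, namely $R_{Y}(t) = 1 - F_{Y}(t)$. So the first step is to write $R_{Y}(t) = 1 - F_{Y}(t)$, then substitute the expression for $F_{Y}(t)$ furnished by Theorem \ref{cdf}, giving $R_{Y}(t) = 1 - \sum_{i\in I} A_{i} F_{X_{i}}(t)$.

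The second step is the crux of the argument, and it is where the hypothesis $\sum_{i\in I} A_{i} = 1$ must be used. Since the coefficients sum to one, I can rewrite the constant $1$ as $\sum_{i\in I} A_{i}$ and pull everything under a single summation:
\[
R_{Y}(t) = \sum_{i\in I} A_{i} - \sum_{i\in I} A_{i} F_{X_{i}}(t) = \sum_{i\in I} A_{i}\bigl(1 - F_{X_{i}}(t)\bigr).
\]
Finally, recognizing that $1 - F_{X_{i}}(t) = R_{X_{i}}(t)$ is precisely the reliability function of each component distribution $X_{i}$, the sum collapses to $R_{Y}(t) = \sum_{i\in I} A_{i} R_{X_{i}}(t)$, which is the claimed identity.

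The only genuine obstacle is the manipulation in the middle step: without the normalization condition $\sum_{i\in I} A_{i} = 1$, the term $1 - \sum_i A_i F_{X_i}(t)$ would not factor cleanly as a weighted sum of the individual reliabilities, and an extra additive constant $\bigl(1 - \sum_i A_i\bigr)$ would survive. Thus the proof hinges entirely on invoking that defining hypothesis of the New Mixed Distribution (Definition \ref{mix}) at exactly the right moment. Everything else is a direct substitution, and the same reasoning applies verbatim whether $Y$ is continuous or discrete, since Theorem \ref{cdf} already covers both cases.
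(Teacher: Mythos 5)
Your proof is correct, but it follows a genuinely different route from the paper's. The paper proves the theorem directly from the definition of the reliability function as a tail probability: it writes $R_{Y}(t)=\int_{t}^{\infty}f_{Y}(s)\,ds$ (respectively $\sum_{x>t}f_{Y}(x)$ in the discrete case), substitutes the mixed form of $f_{Y}$, and interchanges the sum over $i\in I$ with the integral (or sum), giving $R_{Y}(t)=\sum_{i\in I}A_{i}R_{X_{i}}(t)$ by pure linearity. Notably, that argument never uses the normalization $\sum_{i\in I}A_{i}=1$; the tail functional is linear in the density regardless of what the coefficients sum to. Your argument instead reduces to Theorem \ref{cdf} via the complement identity $R_{Y}(t)=1-F_{Y}(t)$, and this reduction genuinely requires $\sum_{i\in I}A_{i}=1$ to absorb the constant $1$ into the sum --- you identify this correctly as the crux. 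That hypothesis is available to you: it is part of Definition \ref{mix}, and the paper even proves later (in a lemma, using only Theorem \ref{cdf} and $\lim_{t\to\infty}F_{Y}(t)=1$) that it is forced whenever $f_{Y}$ is a valid PDF, so there is no circularity. The trade-off is this: your proof is shorter and reuses prior work, and it sidesteps a second interchange of summation and integration by inheriting the one already performed in Theorem \ref{cdf}; the paper's proof is self-contained and slightly more general, since it establishes the identity for arbitrary real coefficients $A_{i}$ without invoking normalization. Both are valid for the continuous and discrete cases under the paper's conventions, where $R(t)=P(X>t)=1-F(t)$.
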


\begin{proof}
Let $f_{Y}(t)=\sum\limits_{i\in I}A_{i}f_{X_{i}}(t)$, first consider $Y$ is a
continuous distribution then the reliability function of Y is
\[
R_{Y}(t)=\int\limits_{t}^{\infty}f_{Y}(s)ds=\int\limits_{t}^{\infty}%
\sum\limits_{i\in I}A_{i}f_{X_{i}}(s)ds=\sum\limits_{i\in I}A_{i}%
\int\limits_{t}^{\infty}f_{X_{i}}(s)ds=\sum\limits_{i\in I}A_{i}R_{X_{i}}(t)
\]

Next, consider Y as a discrete distribution then
\[
R_{Y}(t)=\sum\limits_{x>t}f_{Y}(x)=\sum\limits_{x>t}\sum\limits_{i\in I}%
A_{i}f_{X_{i}}(x)=\sum\limits_{i\in I}A_{i}\sum\limits_{x>t}f_{X_{i}}%
(x)=\sum\limits_{i\in I}A_{i}R_{X_{i}}(t)
\]

\end{proof}

\begin{corollary}
\label{hazard}Let $X_{i},$ $i\in I\subset%
%TCIMACRO{\U{2115} }%
%BeginExpansion
\mathbb{N}
%EndExpansion
$ follows a known distribution and given a new distribution Y such that
\[
f_{Y}(t)=\sum\limits_{i\in I}A_{i}f_{X_{i}}(t)
\]
then its hazard rate function is given as%
\[
h_{Y}(t)=\frac{\sum\limits_{i\in I}A_{i}F_{X_{i}}(t)}{\sum\limits_{i\in
I}A_{i}R_{X_{i}}(t)}.
\]

\end{corollary}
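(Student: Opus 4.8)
The plan is to invoke the standard definition of the hazard rate function and then substitute the two ingredients that are already in hand from the preceding results. Recall that for any distribution the hazard rate is the ratio of the density to the reliability (survival) function, namely $h_Y(t)=f_Y(t)/R_Y(t)$; indeed this is precisely the convention used in the Weibull example of Section~2, where $h(t)=f(t)/R(t)$. Thus the entire task reduces to expressing the numerator and denominator of this ratio in terms of the component distributions $X_i$.

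First I would write the numerator using Definition~\ref{mix}, which gives $f_Y(t)=\sum_{i\in I}A_i f_{X_i}(t)$ by hypothesis. Next I would write the denominator by appealing to Theorem~\ref{reliability}, which has already established that $R_Y(t)=\sum_{i\in I}A_i R_{X_i}(t)$. Dividing one by the other yields
\[
h_Y(t)=\frac{f_Y(t)}{R_Y(t)}=\frac{\sum_{i\in I}A_i f_{X_i}(t)}{\sum_{i\in I}A_i R_{X_i}(t)},
\]
which is the asserted formula. I note that the numerator in the displayed claim should read $f_{X_i}(t)$, the component densities, rather than the component CDFs $F_{X_i}(t)$; the latter appears to be a typographical slip, since the hazard rate is by definition built from the density, not the cumulative distribution function.

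Because this is a corollary whose two inputs --- the mixed-PDF form and the mixed reliability function --- are both already available, there is essentially no obstacle to overcome. The only point requiring any care is bookkeeping: one must keep the same index set $I$, the same real coefficients $A_i$ with $\sum_{i\in I}A_i=1$, and the same convention for the $R_{X_i}$ throughout, so that the substitution into $h_Y=f_Y/R_Y$ is legitimate. No interchange of summation and integration is needed at this stage, since those manipulations were already justified in the proofs of Theorem~\ref{cdf} and Theorem~\ref{reliability}; the corollary is a one-line consequence of the definition of the hazard rate together with the already-proven expression for $R_Y$.
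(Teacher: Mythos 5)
Your proof is correct and takes essentially the same route as the paper's own one-line proof: both apply the definition $h_{Y}(t)=f_{Y}(t)/R_{Y}(t)$ and substitute the hypothesis for the numerator and Theorem~\ref{reliability} for the denominator. Your observation about the numerator is also well taken --- the $F_{X_{i}}(t)$ in the stated formula (which the paper's proof repeats) should indeed be the density $f_{X_{i}}(t)$, as confirmed by the paper's own later uses of the hazard rate, e.g.\ in Theorem~\ref{propw}.
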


\begin{proof}
Let $f_{Y}(t)=\sum\limits_{i\in I}A_{i}f_{X_{i}}(t)$, then the hazard rate
function of Y is
\[
h_{Y}(t)=\frac{f_{Y}(t)}{R_{Y}(t)}=\frac{\sum\limits_{i\in I}A_{i}F_{X_{i}%
}(t)}{\sum\limits_{i\in I}A_{i}R_{X_{i}}(t)}%
\]

\end{proof}

\begin{theorem}
\label{mgf}Let $X_{i},$ $i\in I\subset%
%TCIMACRO{\U{2115} }%
%BeginExpansion
\mathbb{N}
%EndExpansion
$ follows a known distribution and given a new distribution Y such that
\[
f_{Y}(t)=\sum\limits_{i\in I}A_{i}f_{X_{i}}(t)
\]
then its moment generating function MGF is given as%
\[
\Phi_{Y}(t)=\sum\limits_{i\in I}A_{i}\Phi_{X_{i}}(t).
\]

\end{theorem}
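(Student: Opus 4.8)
The plan is to mimic exactly the structure of the proofs for Theorems~\ref{cdf} and~\ref{reliability}, since the moment generating function is, like the CDF and reliability function, an integral (or sum) of the PDF against a fixed kernel. The single ingredient needed is the linearity of the expectation operator, which manifests as linearity of the integral in the continuous case and linearity of the summation in the discrete case.

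First I would recall the definition $\Phi_Y(t)=E[e^{tY}]$. In the continuous case this is
\[
\Phi_Y(t)=\int\limits_{-\infty}^{\infty}e^{tx}f_Y(x)\,dx.
\]
Next I would substitute the assumed Mixed Distribution form $f_Y(x)=\sum_{i\in I}A_i f_{X_i}(x)$ into the integrand, so that the integrand becomes $e^{tx}\sum_{i\in I}A_i f_{X_i}(x)$. The key step is then to interchange the finite (or suitably convergent) sum with the integral and pull each constant $A_i$ outside, yielding
\[
\Phi_Y(t)=\sum\limits_{i\in I}A_i\int\limits_{-\infty}^{\infty}e^{tx}f_{X_i}(x)\,dx=\sum\limits_{i\in I}A_i\Phi_{X_i}(t),
\]
where the last equality is just the definition of each $\Phi_{X_i}(t)=E[e^{tX_i}]$. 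The discrete case is handled identically, replacing $\int e^{tx}(\cdot)\,dx$ by $\sum_{x}e^{tx}(\cdot)$ and interchanging the two summations.

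The only genuine subtlety is justifying the interchange of sum and integral. When $I$ is finite this is immediate from the linearity of the integral, requiring no hypotheses at all; when $I$ is countably infinite one would ordinarily invoke Tonelli/Fubini or dominated convergence, which holds provided the individual MGFs $\Phi_{X_i}(t)$ converge and the series converges absolutely on a common neighborhood of the origin. Given that the paper works throughout at a formal level and that the companion proofs of Theorems~\ref{cdf} and~\ref{reliability} perform the same interchange without comment, I expect the intended proof to simply swap the order of summation and integration directly, treating this as the main (and essentially only) step. Hence the argument is a one-line linearity computation in each of the two cases.
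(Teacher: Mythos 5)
Your proposal is correct and follows essentially the same approach as the paper: substitute the mixed form $f_Y(x)=\sum_{i\in I}A_i f_{X_i}(x)$ into the defining integral (respectively sum) of the MGF, interchange the summation over $I$ with the integral (respectively the summation over the support), and identify each resulting term as $A_i\Phi_{X_i}(t)$, treating both the continuous and discrete cases. Your remark on justifying the interchange for infinite $I$ is a point the paper indeed passes over silently, but it does not change the argument.
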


\begin{proof}
Let $f_{Y}(t)=\sum\limits_{i\in I}A_{i}f_{X_{i}}(t)$, first consider $Y$ is a
continuous distribution then its MGF is%
\[
\Phi_{Y}(t)=\int\limits_{-\infty}^{+\infty}e^{tx}f_{Y}(x)dx=\int
\limits_{-\infty}^{+\infty}e^{tx}\sum\limits_{i\in I}A_{i}f_{X_{i}}%
(x)dx=\sum\limits_{i\in I}A_{i}\int\limits_{-\infty}^{+\infty}e^{tx}f_{X_{i}%
}(x)dx=\sum\limits_{i\in I}A_{i}\Phi_{X_{i}}(t)
\]

while in case Y is discrete its MGF is
\[
\Phi_{Y}(t)=\sum\limits_{j\in J}e^{tx_{j}}f_{Y}(x_{j})=\sum\limits_{j\in
J}e^{tx_{j}}\sum\limits_{i\in I}A_{i}f_{X_{i}}(x_{j})=\sum\limits_{i\in
I}A_{i}\sum\limits_{j\in J}e^{tx_{j}}f_{X_{i}}(x_{j})=\sum\limits_{i\in
I}A_{i}\Phi_{X_{i}}(t)
\]

\end{proof}

\begin{theorem}
\label{nmoment}Let $X_{i},$ $i\in I\subset%
%TCIMACRO{\U{2115} }%
%BeginExpansion
\mathbb{N}
%EndExpansion
$ follows a known distribution and given a new distribution Y such that
\[
f_{Y}(t)=\sum\limits_{i\in I}A_{i}f_{X_{i}}(t)
\]
then its moment of order k is
\[
E[Y^{k}]=\sum\limits_{i\in I}A_{i}E[X_{i}^{k}].
\]

\end{theorem}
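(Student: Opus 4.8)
The plan is to mirror the argument already used for the moment generating function in Theorem \ref{mgf}, simply replacing the kernel $e^{tx}$ by the monomial $x^{k}$. By definition, when $Y$ is a continuous distribution its moment of order $k$ is $E[Y^{k}]=\int_{-\infty}^{+\infty}x^{k}f_{Y}(x)\,dx$. I would first substitute the mixed form $f_{Y}(x)=\sum_{i\in I}A_{i}f_{X_{i}}(x)$ from Definition \ref{mix} directly into this integral, obtaining
\[
E[Y^{k}]=\int\limits_{-\infty}^{+\infty}x^{k}\sum\limits_{i\in I}A_{i}f_{X_{i}}(x)\,dx.
\]

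The key step is then to interchange the summation over $i\in I$ with the integration over $x$, pulling each constant $A_{i}$ outside, so that
\[
E[Y^{k}]=\sum\limits_{i\in I}A_{i}\int\limits_{-\infty}^{+\infty}x^{k}f_{X_{i}}(x)\,dx=\sum\limits_{i\in I}A_{i}E[X_{i}^{k}],
\]
where each inner integral is recognized as the $k$-th moment $E[X_{i}^{k}]$ of the component distribution. For the discrete case I would run the identical computation with the integral replaced by a sum over the support of $Y$, in which case the interchange is merely a rearrangement of the terms of a double sum.

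The main obstacle, as in the companion theorems, is justifying the interchange of the sum with the integral. When the index set $I$ is finite the interchange is immediate by linearity of the integral and no further hypothesis is needed. When $I$ is infinite one should invoke an integrability condition — for instance that each $E[|X_{i}|^{k}]$ is finite together with $\sum_{i\in I}|A_{i}|\,E[|X_{i}|^{k}]<\infty$ — which permits appealing to Tonelli's theorem (or dominated convergence) to swap the order. Since the paper treats $I\subset\mathbb{N}$ and its applications use finitely many components, I expect the finite case to suffice for all stated examples, but I would note the convergence requirement explicitly to make the infinite case rigorous.
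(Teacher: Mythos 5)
Your proposal is correct and follows essentially the same route as the paper's own proof: substitute the mixed form of $f_{Y}$ into the defining integral (or sum), interchange summation with integration, and identify each inner term as $E[X_{i}^{k}]$, treating the continuous and discrete cases separately. Your additional remark on justifying the interchange when $I$ is infinite (via Tonelli or dominated convergence) is a rigor refinement the paper omits, but it does not change the argument.
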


\begin{proof}
Let $f_{Y}(t)=\sum\limits_{i\in I}A_{i}f_{X_{i}}(t)$, where$Y$ is a continuous
distribution then its moment of order k is%
\[
E[Y^{k}]=\int\limits_{-\infty}^{+\infty}x^{k}f_{Y}(x)dx=\int\limits_{-\infty
}^{+\infty}x^{k}\sum\limits_{i\in I}A_{i}f_{X_{i}}(x)dx=\sum\limits_{i\in
I}A_{i}\int\limits_{-\infty}^{+\infty}x^{k}f_{X_{i}}(x)dx=\sum\limits_{i\in
I}A_{i}E[X_{i}^{k}]
\]

similarly, in case Y is discrete then
\[
E[Y^{k}]=\sum\limits_{j\in J}x_{j}^{k}f_{Y}(x_{j})=\sum\limits_{j\in J}%
x_{j}^{k}\sum\limits_{i\in I}A_{i}f_{X_{i}}(x_{j})=\sum\limits_{i\in I}%
A_{i}\sum\limits_{j\in J}x_{j}^{k}f_{X_{i}}(x_{j})=\sum\limits_{i\in I}%
A_{i}E[X_{i}^{k}].
\]

\end{proof}

\begin{lemma}
Consider the mixed distribution $Y$ such that $f_{Y}(t)=\sum\limits_{i\in
I}A_{i}f_{X_{i}}(t)$ where $i\in I\subset%
%TCIMACRO{\U{2115} }%
%BeginExpansion
\mathbb{N}
%EndExpansion
$ and $X_{i}$ are independent random variables. Then $\sum\limits_{i\in
I}A_{i}=1$.
\end{lemma}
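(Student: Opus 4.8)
The plan is to exploit the single defining property of any probability density function: it integrates to unity over its entire domain (and, in the discrete case, sums to unity). Since $Y$ is itself a distribution, $f_{Y}$ is a genuine PDF, so I would begin from the normalization
\[
1=\int\limits_{-\infty}^{+\infty}f_{Y}(t)\,dt=\int\limits_{-\infty}^{+\infty}\sum\limits_{i\in I}A_{i}f_{X_{i}}(t)\,dt .
\]
The whole argument then reduces to pulling the coefficients $A_{i}$ outside the integral and recognizing that each $f_{X_{i}}$, being a PDF in its own right, contributes $\int_{-\infty}^{+\infty}f_{X_{i}}(t)\,dt=1$. This collapses the right-hand side to $\sum_{i\in I}A_{i}$, and equating with the left-hand side yields $\sum_{i\in I}A_{i}=1$. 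For a discrete $Y$ the identical steps run verbatim, with $\int\,dt$ replaced by the sum over the support; this is exactly the continuous/discrete split already used in the proofs of Theorems \ref{cdf}--\ref{nmoment}, so I would reuse that structure.

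The one step that genuinely requires care is the interchange of the summation over $I$ with the integral. When $I$ is finite this is nothing more than linearity of the integral, and the proof is immediate. When $I$ is infinite, however, the interchange is a real hypothesis of the Fubini--Tonelli theorem, and here it cannot be obtained for free from nonnegativity, since the whole novelty of the New Mixed Distribution (Definition \ref{mix}) is precisely that the $A_{i}$ may be negative. I would therefore record the absolute convergence $\sum_{i\in I}|A_{i}|<\infty$ as a standing assumption; under it,
\[
\sum\limits_{i\in I}\int\limits_{-\infty}^{+\infty}\left|A_{i}f_{X_{i}}(t)\right|\,dt=\sum\limits_{i\in I}|A_{i}|<\infty ,
\]
which licenses the exchange of sum and integral. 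This is the main, and essentially the only, obstacle in the proof.

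Finally, I would remark that the independence of the $X_{i}$ stated in the hypothesis plays no role in the argument: the conclusion follows purely from the normalization of the densities and the linearity of the integral (or sum). Hence the independence assumption is not needed for this lemma, and could be dropped without affecting the result.
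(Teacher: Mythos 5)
Your proof is correct, but it takes a genuinely different route from the paper's. The paper never integrates the density: it invokes Theorem \ref{cdf} to write $F_{Y}(t)=\sum_{i\in I}A_{i}F_{X_{i}}(t)$, lets $t\rightarrow\infty$, uses $\lim_{t\rightarrow\infty}F_{Y}(t)=1$ and $\lim_{t\rightarrow\infty}F_{X_{i}}(t)=1$, and interchanges the limit with the sum to conclude $\sum_{i\in I}A_{i}=1$. You instead integrate $f_{Y}$ directly over the real line and exchange the sum with the integral once. Both are normalization arguments, but yours is more economical (it bypasses the CDF entirely), and---more importantly---you are the only one to confront the genuine analytic issue: when $I$ is infinite and the $A_{i}$ may be negative, which is precisely the novelty of Definition \ref{mix}, the interchange is not automatic, and your standing assumption $\sum_{i\in I}\left\vert A_{i}\right\vert <\infty$ together with Fubini--Tonelli repairs it. The paper's proof carries the same gap twice, silently: once inside Theorem \ref{cdf} (sum versus integral) and again when it passes $\lim_{t\rightarrow\infty}$ through the infinite sum, a step that would need, e.g., dominated convergence for series or uniform convergence. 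Your closing observation that independence of the $X_{i}$ plays no role is also correct, and it applies verbatim to the paper's argument as well.
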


\begin{proof}
Consider $f_{Y}(t)=\sum\limits_{i\in I}A_{i}f_{X_{i}}(t)$ the probability
density function of $Y$ then from Theorem \ref{cdf} $F_{Y}(t)=\sum
\limits_{i\in I}A_{i}F_{X_{i}}(t)$ is the CDF of Y. Then,%
\begin{align*}
\underset{t\rightarrow\infty}{\lim}F_{Y}(t)  & =1\\
\underset{t\rightarrow\infty}{\lim}\sum\limits_{i\in I}A_{i}F_{X_{i}}(t)  &
=1\\
\sum\limits_{i\in I}A_{i}\underset{t\rightarrow\infty}{\lim}F_{X_{i}}(t)  & =1
\end{align*}
as $F_{X_{i}}(t)$ is the CDF of $X_{i}\ $then $\underset{t\rightarrow\infty
}{\lim}F_{X_{i}}(t)=1.$ Therefore,
\[
\sum\limits_{i\in I}A_{i}=1.
\]

\end{proof}

Therefore, in this part we defined the New Mixed Distribution type and
generalized its CDF, reliability function, hazard rate function, MGF and nth moment.

\subsection{Parameter Estimation of New Mixed distribution}

In addition to what proceeds, we present the following methods to generalize
the estimation of parameters of any distribution of this form.

\subsubsection{Maximum Likelihood Estimation}

\begin{theorem}
\label{ml}Given a mixed distribution $Y$ of parameter $\overrightarrow{\theta
}$ such that $f_{Y}(x)=\sum\limits_{j\in J}A_{j}f_{X_{j}}(x)$ where $X_{j},$
$j\in J\subset%
%TCIMACRO{\U{2115} }%
%BeginExpansion
\mathbb{N}
%EndExpansion
$ are independent random variables that follow a known distribution. Given
independent observations $x_{1},x_{2},...,x_{n}$ then the maximum likelihood
estimator $\overset{\wedge}{\theta}$ is that which maximizes the likelihood
function
\[
L(x_{1},x_{2},...,x_{n};\overrightarrow{\theta})=f_{Y}(x,\overrightarrow
{\theta})=f_{Y}(x_{1},\overrightarrow{\theta})f_{Y}(x_{2},\overrightarrow
{\theta})...f_{Y}(x_{n},\overrightarrow{\theta})
\]
is obtained by solving the equality
\[
\sum\limits_{i=1}^{n}\frac{\sum\limits_{j\in J}\frac{\partial}{\partial\theta
}A_{j}f_{X_{j}}(x_{i}|\overrightarrow{\theta})}{\sum\limits_{j\in J}%
A_{j}f_{X_{j}}(x_{i}|\overrightarrow{\theta})}=0.
\]
Given a distribution Y of parameter $\overrightarrow{\theta}$ such that
$f_{Y}(x)=\sum\limits_{j\in J}A_{j}f_{X_{j}}(x)$ where $X_{j},\ j\in J\subset%
%TCIMACRO{\U{2115} }%
%BeginExpansion
\mathbb{N}
%EndExpansion
$ be independent random variables that follow a known distribution. Suppose
that the random sample $x_{1},x_{2},...,x_{n}$ is taken from the distribution
then to find the maximum likelihood estimate of $\theta$ given that%
\[
L(x_{1},x_{2},...,x_{n};\overrightarrow{\theta})=\prod\limits_{i=1}^{n}%
f_{Y}(x_{i}|\overrightarrow{\theta})
\]

\end{theorem}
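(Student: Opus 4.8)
The plan is to proceed by the standard maximum likelihood recipe: form the likelihood, pass to the log-likelihood, differentiate, and impose the first-order stationarity condition. First I would observe that since the observations $x_{1},x_{2},...,x_{n}$ are independent, the joint density factorizes, so that by Definition \ref{mix} the likelihood is
\[
L(x_{1},\ldots,x_{n};\overrightarrow{\theta})=\prod_{i=1}^{n}f_{Y}(x_{i}|\overrightarrow{\theta})=\prod_{i=1}^{n}\left(\sum_{j\in J}A_{j}f_{X_{j}}(x_{i}|\overrightarrow{\theta})\right).
\]

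Next, because the logarithm is strictly increasing, the value of $\overrightarrow{\theta}$ that maximizes $L$ is exactly the one that maximizes $\ell(\overrightarrow{\theta}):=\log L$. Taking logs converts the product into a sum,
\[
\ell(\overrightarrow{\theta})=\sum_{i=1}^{n}\log\left(\sum_{j\in J}A_{j}f_{X_{j}}(x_{i}|\overrightarrow{\theta})\right).
\]
Any interior maximizer must be a critical point, so I would impose $\partial\ell/\partial\theta=0$. Differentiating term by term, using the chain rule on each $\log(\cdot)$ and the linearity of $\partial/\partial\theta$ across the finite index sum over $j$, gives
\[
\frac{\partial\ell}{\partial\theta}=\sum_{i=1}^{n}\frac{\sum_{j\in J}\frac{\partial}{\partial\theta}A_{j}f_{X_{j}}(x_{i}|\overrightarrow{\theta})}{\sum_{j\in J}A_{j}f_{X_{j}}(x_{i}|\overrightarrow{\theta})},
\]
and setting this expression to zero yields the asserted equation.

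The only real subtleties here are bookkeeping rather than genuine difficulty. The interchange of derivative and sum is immediate since $J$ is a finite index set, and the denominator $\sum_{j\in J}A_{j}f_{X_{j}}(x_{i}|\overrightarrow{\theta})=f_{Y}(x_{i})$ is positive on the support, so both the logarithm and the resulting quotient are well defined. The point I would flag most carefully is interpretive rather than computational: the displayed equation is the first-order necessary (stationarity) condition characterizing critical points of the likelihood, and by itself it only locates the candidate estimator $\overset{\wedge}{\theta}$; a complete justification that this critical point is in fact a maximizer would additionally require verifying a second-order condition or arguing that the stationary point is unique and that $\ell$ decreases toward the boundary of the parameter space. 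Thus the honest statement is that solving this equation produces the candidate for $\overset{\wedge}{\theta}$, which is precisely what the theorem asserts.
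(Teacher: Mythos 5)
Your proposal is correct and follows essentially the same route as the paper's own proof: form the likelihood as a product, pass to the log-likelihood, differentiate via the chain rule, interchange the derivative with the finite sum over $j$, and set the result to zero. Your added remark that the displayed equation is only a first-order stationarity condition (so a genuine maximizer requires a second-order or uniqueness check) is a sound refinement the paper omits, but it does not change the argument.
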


\begin{proof}
second,
\begin{align*}
\ln L(x_{1},x_{2},...,x_{n};\overrightarrow{\theta})  & =\ln\prod
\limits_{i=1}^{n}f_{Y}(x_{i}|\overrightarrow{\theta})\\
& =\sum\limits_{i=1}^{n}\ln f_{Y}(x_{i}|\overrightarrow{\theta})\\
& =\sum\limits_{i=1}^{n}\ln\sum\limits_{j\in J}A_{j}f_{X_{j}}(x_{i}%
|\overrightarrow{\theta})
\end{align*}

third for parameter $\theta$%
\begin{align*}
\frac{\partial\ln L(x_{1},x_{2},...,x_{n};\theta)}{\partial\theta}  &
=\frac{\partial}{\partial\theta}\sum\limits_{i=1}^{n}\ln\sum\limits_{j\in
J}A_{j}f_{X_{j}}(x_{i}|\theta)\\
& =\sum\limits_{i=1}^{n}\frac{\partial}{\partial\theta}\ln\sum\limits_{j\in
J}A_{j}f_{X_{j}}(x_{i}|\theta)\\
& =\sum\limits_{i=1}^{n}\frac{\frac{\partial}{\partial\theta}\sum\limits_{j\in
J}A_{j}f_{X_{j}}(x_{i}|\theta)}{\sum\limits_{j\in J}A_{j}f_{X_{j}}%
(x_{i}|\theta)}\\
& =\sum\limits_{i=1}^{n}\frac{\sum\limits_{j\in J}\frac{\partial}%
{\partial\theta}A_{j}f_{X_{j}}(x_{i}|\theta)}{\sum\limits_{j\in J}%
A_{j}f_{X_{j}}(x_{i}|\theta)}%
\end{align*}

fourth
\[
\sum\limits_{i=1}^{n}\frac{\sum\limits_{j\in J}\frac{\partial}{\partial\theta
}A_{j}f_{X_{j}}(x_{i}|\theta)}{\sum\limits_{j\in J}A_{j}f_{X_{j}}(x_{i}%
|\theta)}=0.
\]

\end{proof}

\subsubsection{Method of Moments}

\begin{theorem}
\label{moments}Suppose the problem is to estimate n unknown parameters
$\theta_{1},...,\theta_{n}$ characterizing the distribution
\[
f_{Y}(x|\mathbf{\theta})=\sum\limits_{i\in I}A_{i}f_{X_{i}}(x;\mathbf{\theta})
\]
of the random variable $Y(\theta_{1},...,\theta_{n})\ $where $X_{i},$ $i\in
I\subset%
%TCIMACRO{\U{2115} }%
%BeginExpansion
\mathbb{N}
%EndExpansion
$ are independent random variables that follow a known distribution. Then, the
first n moments are expressed as follows%
\begin{align*}
\mu_{1}  & =g_{1}(\theta_{1},...,\theta_{n})=E[Y^{1}]=\sum\limits_{i\in
I}A_{i}E[X_{i}^{1}]\\
\mu_{2}  & =g_{2}(\theta_{1},...,\theta_{n})=E[Y^{2}]=\sum\limits_{i\in
I}A_{i}E[X_{i}^{2}]\\
& \vdots\\
\mu_{n}  & =g_{n}(\theta_{1},...,\theta_{n})=E[Y^{n}]=\sum\limits_{i\in
I}A_{i}E[X_{i}^{n}]
\end{align*}

suppose a sample of size $m$ is drawn, resulting in the values $y_{1}%
,y_{2},...,y_{m}$ for $k=1,2,...,n$ let
\[
\overset{\wedge}{\mu}_{k}=\frac{1}{m}\sum\limits_{j=1}^{m}y_{j}^{k}%
\]
be the k-th sample moment, an estimate of $\mu_{k}$. The method of moments
estimator for $\theta_{1},...,\theta_{n}$ denoted by $\overset{\wedge}{\theta
}_{1},...,\overset{\wedge}{\theta}_{n}$is defined as the solution to the
equation%
\[%
\begin{tabular}
[c]{ll}%
$\overset{\wedge}{\mu}_{k}=g_{k}(\overset{\wedge}{\theta}_{1},...,\overset
{\wedge}{\theta}_{n})$ & $k=1,2,...,n.$%
\end{tabular}
\]

\end{theorem}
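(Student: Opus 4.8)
The plan is to separate the statement into its two genuinely distinct parts: the closed-form expression for the population moments, which is a direct consequence of Theorem \ref{nmoment}, and the definition of the estimator, which is the standard method-of-moments prescription specialized to the New Mixed form.

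First I would fix an index $k \in \{1, \ldots, n\}$ and observe that the distribution $f_Y(x \mid \overrightarrow{\theta}) = \sum_{i \in I} A_i f_{X_i}(x; \overrightarrow{\theta})$ is exactly of the New Mixed shape to which Theorem \ref{nmoment} applies. Invoking that theorem gives $\mu_k = E[Y^k] = \sum_{i \in I} A_i E[X_i^k]$. Since each $X_i$ has a known distribution, every moment $E[X_i^k]$ is an explicit function of the parameters $\theta_1, \ldots, \theta_n$, so the right-hand side is a function of the parameters that I would name $g_k(\theta_1, \ldots, \theta_n)$. Letting $k$ range from $1$ to $n$ produces the displayed system of $n$ moment identities.

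Next, for the estimation half of the statement, I would recall the principle on which the method of moments rests: by the law of large numbers, the empirical moment $\widehat{\mu}_k = \frac{1}{m} \sum_{j=1}^{m} y_j^{k}$ is a consistent estimator of the population moment $\mu_k = E[Y^k]$. Substituting each $\widehat{\mu}_k$ for the unknown $\mu_k$ and each estimate $\widehat{\theta}_\ell$ for the unknown $\theta_\ell$ in the identities $\mu_k = g_k(\theta_1, \ldots, \theta_n)$ turns them into the $n$ estimating equations $\widehat{\mu}_k = g_k(\widehat{\theta}_1, \ldots, \widehat{\theta}_n)$. By definition, the method-of-moments estimator is any solution $(\widehat{\theta}_1, \ldots, \widehat{\theta}_n)$ of this square system, which is exactly the assertion to be proved.

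The part I expect to require the most care is not the derivation of the moment formulas, which is immediate, but the implicit claim that the final system actually determines the estimator. Existence and uniqueness of a solution hinge on the map $(\theta_1, \ldots, \theta_n) \mapsto (g_1, \ldots, g_n)$ being locally invertible, i.e.\ on non-vanishing of the Jacobian $\det\!\left( \partial g_k / \partial \theta_\ell \right)$, equivalently on identifiability of the parametrization through its first $n$ moments. Since the statement only declares the estimator to be the solution of the system rather than proving it exists, I would resolve this by remarking that under the usual identifiability assumption the inverse function theorem guarantees a unique local solution, and that in degenerate cases the estimator is simply taken to be any solution of the system.
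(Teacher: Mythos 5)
Your proposal is correct, and it is worth noting that the paper itself offers no proof of this theorem at all: the statement appears bare at the end of the parameter-estimation subsection and the section then closes. What you wrote is precisely the argument the paper leaves implicit. The only substantive mathematical content is the system of moment identities, which, as you say, is an immediate instance of Theorem \ref{nmoment} applied to the New Mixed form $f_Y(x\mid\overrightarrow{\theta})=\sum_{i\in I}A_i f_{X_i}(x;\overrightarrow{\theta})$; the rest of the statement (replacing $\mu_k$ by $\widehat{\mu}_k$ and declaring the estimator to be a solution of the resulting square system) is the standard method-of-moments prescription and is definitional rather than something to be proved. Your closing remark about solvability --- that the system $\widehat{\mu}_k = g_k(\widehat{\theta}_1,\ldots,\widehat{\theta}_n)$, $k=1,\ldots,n$, only determines the estimator under an identifiability condition, e.g.\ non-vanishing of the Jacobian $\det\bigl(\partial g_k/\partial\theta_\ell\bigr)$ --- goes beyond anything in the paper, which silently assumes a solution exists. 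That extra care is a genuine improvement in rigor, though strictly optional, since the theorem as stated merely defines the estimator as a solution of the system rather than asserting its existence or uniqueness.
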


Finally, in this section we were able to generate a new type of distributions
called " The New Mixed Distribution" and examine different properties
regarding its form such as PDF, MGF, reliability and hazard rate functions,
and moment of order k then generalize 2 methods of estimations which are
maximum likelihood estimation and method of moments.

\section{The New Mixed T-G Family}

In this section, we generate from the mixed distribution "T" a new type of
distributions named the New Mixed T-G Distribution which is a mixed
distribution, leading to construct The New Mixed T-G Family of same mother
mixed distribution T.

\subsection{The New Mixed T-G Distribution}

Next, we define the second New type of distributions which is "The New Mixed
T-G Distribution" and generalize some of its properties.

\begin{theorem}
Given a random variable $Z$ with a probability density function $f_{Z}%
(t)=\sum\limits_{i\in I}A_{i}f_{X_{i}}(t),$ where $X_{i},$ $i\in I\subset%
%TCIMACRO{\U{2115} }%
%BeginExpansion
\mathbb{N}
%EndExpansion
$ be vector of random variables and $A_{i}$ $\in%
%TCIMACRO{\U{211d} }%
%BeginExpansion
\mathbb{R}
%EndExpansion
$. Let $Y=g(X)$ be a 1-1 function of a random variable $X$. Then
\[
\sum\limits_{i\in I}A_{i}f_{g(X_{i})}(y)
\]
is a valid distribution generated from the mixed distribution of $X_{i}$.

\begin{proof}
The aim is to proof that $\sum\limits_{i\in I}A_{i}f_{g(X_{i})}(t)$ is a valid
PDF. First,suppose that $g(X_{i})$ is continuous. We start by proving the
expression is positive. We have
\[
f_{g(X_{i})}(y)=\sum_{j=1}^{k}f_{X_{i}}\left[  w_{j}(y)\right]  \left\vert
J_{j}\right\vert =\sum_{j=1}^{k}f_{X_{i}}\left[  x_{j}\right]  \left\vert
J_{j}\right\vert
\]
where $J_{i}=w_{i}^{\prime}\left(  y\right)  $ Now, $\sum\limits_{i\in I}%
A_{i}f_{g(X_{i})}(y)=\sum\limits_{i\in I}A_{i}\sum_{j=1}^{k}f_{X_{i}}\left[
x_{j}\right]  \left\vert J_{j}\right\vert =\sum_{j=1}^{k}\sum\limits_{i\in
I}A_{i}f_{X_{i}}\left[  x_{j}\right]  \left\vert J_{j}\right\vert =\sum
_{j=1}^{k}f_{Z}\left(  x_{j}\right)  \left\vert J_{j}\right\vert $. Now, since
$f_{Z}\left(  x\right)  $ is a PDF of $Z$, then $f_{Z}\left(  x\right)  \geq0$
and $\left\vert J_{j}\right\vert \geq0$, thus $\sum_{j=1}^{k}f_{Z}\left(
x_{j}\right)  \left\vert J_{i}\right\vert \geq0$. Next, we need to prove that
$\int_{%
%TCIMACRO{\U{211d} }%
%BeginExpansion
\mathbb{R}
%EndExpansion
}f_{T}(y)dy=1$. We have $\int_{%
%TCIMACRO{\U{211d} }%
%BeginExpansion
\mathbb{R}
%EndExpansion
}f_{T}(y)dy=\int_{%
%TCIMACRO{\U{211d} }%
%BeginExpansion
\mathbb{R}
%EndExpansion
}\sum\limits_{i\in I}A_{i}f_{g(X_{i})}(y)dy=\sum\limits_{i\in I}A_{i}\int_{%
%TCIMACRO{\U{211d} }%
%BeginExpansion
\mathbb{R}
%EndExpansion
}f_{g(X_{i})}(y)dy=\sum\limits_{i\in I}A_{i}$ as $\int_{%
%TCIMACRO{\U{211d} }%
%BeginExpansion
\mathbb{R}
%EndExpansion
}f_{g(X_{i})}(y)dy=1$. Also from Definition \ref{mix} $\sum\limits_{i\in
I}A_{i}=1.$ Hence $\int_{%
%TCIMACRO{\U{211d} }%
%BeginExpansion
\mathbb{R}
%EndExpansion
}f_{T}(y)dy=1$. Therefore, $\sum\limits_{i\in I}A_{i}f_{g(X_{i})}(t)$ is a
valid PDF.

Second, we will reprove the previous but for $g(X_{i})$ is discrete random
variable. First,
\[
f_{g(X_{i})}(y)=\sum_{j=1}^{k}f_{X_{i}}\left[  w_{j}(y)\right]  =\sum
_{j=1}^{k}f_{X_{i}}\left[  x_{j}\right]
\]
And, $\sum\limits_{i\in I}A_{i}f_{g(X_{i})}(y)=\sum\limits_{i\in I}A_{i}%
\sum_{j=1}^{k}f_{X_{i}}\left[  x_{j}\right]  =\sum_{j=1}^{k}\sum\limits_{i\in
I}A_{i}f_{X_{i}}\left[  x_{j}\right]  =\sum_{j=1}^{k}f_{Z}\left(
x_{j}\right)  $. As $f_{Z}\left(  x\right)  $ is a PDF of $Z$, then
$f_{Z}\left(  x\right)  \geq0$, thus $\sum_{j=1}^{k}f_{Z}\left(  x_{j}\right)
\geq0$.

Still to prove that $\sum\limits_{y}f_{T}(y)=1$. We have $\sum\limits_{y}%
f_{T}(y)=\sum\limits_{y}\sum\limits_{i\in I}A_{i}f_{g(X_{i})}(y)=\sum
\limits_{i\in I}A_{i}\sum\limits_{y}f_{g(X_{i})}(y)=\sum\limits_{i\in I}A_{i}$
as $\sum\limits_{y}f_{g(X_{i})}(y)=1$. Also from Definition \ref{mix}
$\sum\limits_{i\in I}A_{i}=1.$ Hence $\sum\limits_{y}f_{T}(y)=1$. Therefore,
$\sum\limits_{i\in I}A_{i}f_{g(X_{i})}(t)$ is a valid PDF.
\end{proof}
\end{theorem}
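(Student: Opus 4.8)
The plan is to verify the two defining properties of a probability density function for the candidate $h(y)=\sum\limits_{i\in I}A_{i}f_{g(X_{i})}(y)$: that $h$ is non-negative everywhere, and that its total mass is one. I would treat the continuous and discrete cases in parallel, since only the change-of-variables bookkeeping differs between them, and assume throughout the constraint $\sum\limits_{i\in I}A_{i}=1$ from Definition \ref{mix}.

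The organizing idea — and the step around which I would build everything — is that the transformation $g$ is the \emph{same} for every index $i$, so its inverse branches $w_{j}$ and the associated Jacobian factors $\left\vert J_{j}\right\vert=\left\vert w_{j}^{\prime}(y)\right\vert$ do not depend on $i$. This permits interchanging the sum over $i$ with the sum over branches $j$ in the change-of-variables formula $f_{g(X_{i})}(y)=\sum_{j}f_{X_{i}}[w_{j}(y)]\left\vert J_{j}\right\vert$, yielding
\[
\sum\limits_{i\in I}A_{i}f_{g(X_{i})}(y)=\sum_{j}\left\vert J_{j}\right\vert\sum\limits_{i\in I}A_{i}f_{X_{i}}(x_{j})=\sum_{j}\left\vert J_{j}\right\vert f_{Z}(x_{j}),
\]
where $x_{j}=w_{j}(y)$. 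In effect, $h$ is precisely the density of $g(Z)$.

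Non-negativity then follows with no sign hypothesis on the $A_{i}$: since $f_{Z}$ is a genuine PDF by assumption we have $f_{Z}(x_{j})\geq0$, and each Jacobian factor satisfies $\left\vert J_{j}\right\vert\geq0$, so every summand of $\sum_{j}\left\vert J_{j}\right\vert f_{Z}(x_{j})$ is non-negative. I would stress that this is exactly the point at which a naive termwise bound fails, because individual weights $A_{i}$ may be negative; non-negativity is recovered only after recombining the inner sum into $f_{Z}$. This is the main obstacle, and the interchange of summations above — valid precisely because $g$ is common to all the $X_{i}$ — is its resolution.

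For the total-mass condition I would instead argue termwise, in the opposite direction. Each $g(X_{i})$ is a legitimate random variable, being the $1$--$1$ image of $X_{i}$, so each $f_{g(X_{i})}$ integrates to one. Hence $\int_{\mathbb{R}}h(y)\,dy=\sum\limits_{i\in I}A_{i}\int_{\mathbb{R}}f_{g(X_{i})}(y)\,dy=\sum\limits_{i\in I}A_{i}=1$, the last equality by Definition \ref{mix}; in the discrete case the integral is replaced by the corresponding sum over the support, with the identical argument. Together these establish that $h$ is a valid PDF.
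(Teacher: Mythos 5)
Your proposal is correct and follows essentially the same route as the paper's own proof: the same change-of-variables expansion $f_{g(X_{i})}(y)=\sum_{j}f_{X_{i}}[w_{j}(y)]\left\vert J_{j}\right\vert$, the same interchange of the sums over $i$ and $j$ (valid because $g$, hence the branches $w_{j}$ and Jacobians, are common to all $i$) to recombine the inner sum into $f_{Z}$ and obtain non-negativity despite possibly negative $A_{i}$, and the same termwise integration together with $\sum_{i\in I}A_{i}=1$ for unit total mass, in both the continuous and discrete cases. Your explicit remark that the candidate density is exactly that of $g(Z)$ makes the paper's implicit mechanism transparent, but it is the same argument.
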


\begin{definition}
Given our Mixed distribution of $X_{i},$ having a PDF
\[
\sum\limits_{i\in I}A_{i}f_{X_{i}}(t)
\]
that follows a distribution named "$T$" and suppose that $g(X_{i})$ is a known
unique distribution $g(X_{i})$ named "G". We denote the Mixed T-G distribution
generated from T as the distribution having a PDF
\[
\sum\limits_{i\in I}A_{i}f_{g(X_{i})}(y).
\]

\end{definition}

\subsection{Properties of New Mixed T-G Distribution}

We point out that the mixed T-G distribution having a PDF $\sum\limits_{i\in
I}A_{i}f_{g(X_{i})}(y)$ is a mixed distribution thus we may consider that all
the properties in Theorems \ref{cdf}, \ref{reliability}, \ref{hazard},
\ref{mgf}, \ref{nmoment} may be used for our new distribution. Thus we state
some important results. The CDF of our new Mixed T-G distribution is
\[
\sum\limits_{i\in I}A_{i}F_{g\left(  X_{i}\right)  }(y),
\]
the reliability function is
\[
\sum\limits_{i\in I}A_{i}R_{g\left(  X_{i}\right)  }(y)
\]
the hazard rate function
\[
\frac{\sum\limits_{i\in I}A_{i}f_{g\left(  X_{i}\right)  }(t)}{\sum
\limits_{i\in I}A_{i}R_{g\left(  X_{i}\right)  }(t)}%
\]
MGF is
\[
\sum\limits_{i\in I}A_{i}\Phi_{g\left(  X_{i}\right)  }(t)
\]
and moment of order k is
\[
\sum\limits_{i\in I}A_{i}E[g\left(  X_{i}\right)  ^{k}].
\]

In this part, we generate the second type of distributions "The New Mixed T-G
distribution" out of a mixed distribution T of $X_{i}$ and a transformation G
of $X_{i}$ then it is a mixed distribution of transformation of $X_{i}.$
Moreover, the CDF, MGF, reliability and hazard rate functions, and moment of
order k for this type are generalized. Finally, We end this Section to point
out that the New Mixed T-G Family is generated by fixing a parent mixed
distribution T of $X_{i}$ then substituting $X_{i}$ by its transformations G
to obtain different Mixed T-G Distributions belonging to the same family.

\section{The New Mixed Hypoexponential-G Family}

In this section we adopt the Hypoexponential distribution with different
parameters to be the parent distribution T of the Mixed Hypoexponential T-G
Family. This can be defined as the Hypoexponential distribution is a Mixed
distribution of the exponential distribution from Definition \ref{mix}%
.Therefore, we generate some distributions and examine deep the properties
obtained for these Mixed distributions.

\subsection{The Mixed Hypoexponential Weibull Distribution}

In this section we introduce a new distribution denoted by the Mixed
Hypoexponential Weibull distribution. This distribution is generated from the
Hypoexponential distribution with different parameters with a Weibull
transform distribution.

We start from the Hypoexponential distribution with different parameters
$S_{n}\sim hypo\exp(\overrightarrow{\alpha})$, where $\overrightarrow{\alpha
}=\left(  \alpha_{1},\alpha_{2},...,\alpha_{n}\right)  $ and $\alpha_{i}%
\neq\alpha_{j}$. The PDF of $S_{n}$ is given from Theorem \ref{difexp} as
\[
f_{S_{n}}(t)=\sum\limits_{i=1}^{n}\frac{f_{X_{i}}(t)}{P_{i}},\ t>0
\]
where $X_{i}$ $\sim Exp(\alpha_{i})$ and $P_{i}=\prod\limits_{j=1,j\neq i}%
^{n}(1-\frac{\alpha_{i}}{\alpha_{j}})$.

On the other hand we use the transform distribution of $X_{i}\sim
Exp(\alpha_{i})$, as $Y_{i}=X_{i}^{c}\sim Weibull(\frac{1}{c},\frac{1}%
{\alpha_{i}^{c}})$ where $c>0.$ Now, suppose that $k=\frac{1}{c}$,
$\lambda_{i}=\frac{1}{\alpha_{i}^{c}}$, then $\alpha=\frac{1}{\lambda^{1/c}%
}=\frac{1}{\lambda^{k}}$ and $c=\frac{1}{k}$ and we may write $Y_{i}\sim
Weibull(k,\lambda_{i})$, $i=1,2,...,n$. Thus we write $P_{i}=\prod
\limits_{j=1,j\neq i}^{n}(1-\frac{\alpha_{i}}{\alpha_{j}})=\prod
\limits_{j=1,j\neq i}^{n}(1-\frac{\frac{1}{\lambda_{i}^{k}}}{\frac{1}%
{\lambda_{j}^{k}}})=\prod\limits_{j=1,j\neq i}^{n}(1-\frac{\lambda_{j}^{k}%
}{\lambda_{i}^{k}})=\prod\limits_{j=1,j\neq i}^{n}(1-\left(  \frac{\lambda
_{j}}{\lambda_{i}}\right)  ^{k})$, we call this $PW_{i}$ which is transformed
from $P_{i}$. Therefore, we obtain our new distribution, the Mixed
Hypoexponential Weibull distribution $Z\sim MHW(k,\lambda_{1},\lambda
_{2},...,\lambda_{n}),k,\lambda_{i}>0$ with PDF
\[
f_{Z}(t)=\sum\limits_{i=1}^{n}\frac{1}{PW_{i}}f_{Y_{i}}(t)
\]
where $Y_{i}$ $\sim Weibull(k,\lambda_{i})$ and $PW_{i}=\prod
\limits_{j=1,j\neq i}^{n}(1-\left(  \frac{\lambda_{j}}{\lambda_{i}}\right)
^{k}).$

\subsubsection{The Properties of Mixed Hypoexponential Weibull Distribution}

Here we generalize the CDF, MGF, Reliability and Hazard Rate functions, for
the Mixed Hypoexponential Weibull Distribution.

\begin{theorem}
\label{propw}Let $Z\sim MHW(k,\lambda_{1},\lambda_{2},...,\lambda_{n})$ then
according to Theorems \ref{cdf}, \ref{reliability}, \ref{hazard}, \ref{mgf},
and \ref{nmoment} $Z$ has the following properties%
\[%
\begin{tabular}
[c]{ll}%
$F_{X}(t)=\sum\limits_{i=1}^{n}\frac{1}{PW_{i}}F_{Y_{i}}(t)$ & $R_{X}%
(t)=\sum\limits_{i=1}^{n}\frac{1}{PW_{i}}R_{Y_{i}}(t)$\\
$h_{X}(t)=\frac{\sum\limits_{i=1}^{n}\frac{1}{PW_{i}}f_{Y_{i}}(t)}%
{\sum\limits_{i=1}^{n}\frac{1}{PW_{i}}R_{Y_{i}}(t)}$ & $\Phi_{Z}%
(t)=\sum\limits_{i=1}^{n}\frac{1}{PW_{i}}\Phi_{Y_{i}}(t)$\\
$E[Z^{h}]=\sum\limits_{i=1}^{n}\frac{1}{PW_{i}}E[Y_{i}^{k}]$ &
\end{tabular}
\]

where $Y_{i}$ $\sim Weibull(k,\lambda_{i})$ and $PW_{i}=\prod
\limits_{j=1,j\neq i}^{n}(1-\left(  \frac{\lambda_{j}}{\lambda_{i}}\right)
^{k}).$
\end{theorem}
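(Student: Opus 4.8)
The plan is to observe that $Z$ is itself an instance of the New Mixed Distribution of Definition \ref{mix}, so that each of the five claimed identities becomes an immediate specialization of the general properties already established in Theorems \ref{cdf}, \ref{reliability}, \ref{hazard}, \ref{mgf}, and \ref{nmoment}. Indeed, the density $f_{Z}(t)=\sum_{i=1}^{n}\frac{1}{PW_{i}}f_{Y_{i}}(t)$ has exactly the form $\sum_{i\in I}A_{i}f_{X_{i}}(t)$ with index set $I=\{1,\dots,n\}$, real coefficients $A_{i}=\frac{1}{PW_{i}}$, and component densities $f_{X_{i}}=f_{Y_{i}}$ for $Y_{i}\sim Weibull(k,\lambda_{i})$. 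Note that the $A_{i}$ need not be nonnegative, which is precisely why the general framework of Definition \ref{mix} (with $A_{i}\in\mathbb{R}$) is the correct setting; since Theorems \ref{cdf}--\ref{nmoment} were proved for arbitrary real coefficients, this causes no difficulty.

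First I would check the one nontrivial hypothesis of Definition \ref{mix}, namely $\sum_{i=1}^{n}A_{i}=1$. This is where the reparametrization from the construction of the distribution does the work: writing $\alpha_{i}=\lambda_{i}^{-k}$ gives $\frac{\alpha_{i}}{\alpha_{j}}=\left(\frac{\lambda_{j}}{\lambda_{i}}\right)^{k}$, so that
\[
PW_{i}=\prod_{j=1,j\neq i}^{n}\left(1-\left(\frac{\lambda_{j}}{\lambda_{i}}\right)^{k}\right)=\prod_{j=1,j\neq i}^{n}\left(1-\frac{\alpha_{i}}{\alpha_{j}}\right)=P_{i}.
\]
Because the parent Hypoexponential density $f_{S_{n}}(t)=\sum_{i=1}^{n}\frac{1}{P_{i}}f_{X_{i}}(t)$ is a genuine probability density, the normalization Lemma forces $\sum_{i=1}^{n}\frac{1}{P_{i}}=1$, and hence $\sum_{i=1}^{n}\frac{1}{PW_{i}}=1$. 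Equivalently, this is the content of the validity theorem for the New Mixed T-G Distribution, since $Y_{i}=X_{i}^{1/k}$ is a one-to-one transform of $X_{i}$ and the coefficients are carried over unchanged.

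With the coefficients confirmed to sum to one, I would then invoke each general theorem in turn with $A_{i}=\frac{1}{PW_{i}}$: Theorem \ref{cdf} yields $F_{Z}(t)=\sum_{i=1}^{n}\frac{1}{PW_{i}}F_{Y_{i}}(t)$; Theorem \ref{reliability} yields the reliability function $R_{Z}$; Corollary \ref{hazard} yields the hazard rate as the ratio of the density to the reliability function; Theorem \ref{mgf} yields the MGF $\Phi_{Z}$; and Theorem \ref{nmoment} yields the moment of order $k$. No calculation specific to the Weibull family is needed at this stage, and the explicit Weibull forms of $F_{Y_{i}}$, $R_{Y_{i}}$, $\Phi_{Y_{i}}$, and $E[Y_{i}^{k}]$ may be substituted afterward from the Weibull definition if closed forms are desired.

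The only step that requires genuine verification \emph{--- and it is a mild one ---} is the identity $\sum_{i=1}^{n}\frac{1}{PW_{i}}=1$; everything else is a direct appeal to previously proved results. The cleanest route is the equality $PW_{i}=P_{i}$ above, which reduces the normalization to the already-known fact for the Hypoexponential distribution rather than demanding a fresh partial-fraction computation in the $\lambda_{i}$.
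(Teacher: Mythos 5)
Your proposal is correct and is essentially the paper's own argument: the paper states Theorem \ref{propw} as a direct specialization of Theorems \ref{cdf}, \ref{reliability}, \ref{hazard}, \ref{mgf}, and \ref{nmoment} with $A_{i}=\frac{1}{PW_{i}}$ and $X_{i}=Y_{i}\sim Weibull(k,\lambda_{i})$, offering no further proof. Your added verification that $\sum_{i=1}^{n}\frac{1}{PW_{i}}=1$ via the identification $PW_{i}=P_{i}$ under $\alpha_{i}=\lambda_{i}^{-k}$ is a welcome tightening, but it matches what the paper does implicitly through its construction of $MHW$ from the Hypoexponential distribution and the validity theorem for Mixed T-G distributions.
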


\begin{corollary}
Let $Z\sim MHW(k,\lambda_{1},\lambda_{2},...,\lambda_{n})$ then according to
Theorem \ref{propw} the CDF, reliability and hazard rate functions, MGF and
moment of order h of $Z$ are%
\[%
\begin{tabular}
[c]{ll}%
$F_{Z}(t)=\sum\limits_{i=1}^{n}\frac{(1-e^{-(\frac{t}{k})^{\lambda_{i}}}%
)}{\prod\limits_{j=1,j\neq i}^{n}(1-\left(  \frac{\lambda_{j}}{\lambda_{i}%
}\right)  ^{k})}$ & $R_{Z}(t)=\sum\limits_{i=1}^{n}\frac{e^{-(\frac{t}%
{k})^{\lambda_{i}}}}{\prod\limits_{j=1,j\neq i}^{n}(1-\left(  \frac
{\lambda_{j}}{\lambda_{i}}\right)  ^{k})}$\\
$h_{Z}(t)=\frac{\sum\limits_{i=1}^{n}\frac{\frac{\lambda_{i}}{k}(\frac{t}%
{k})^{\lambda_{i}-1}e^{-(\frac{t}{k})^{\lambda_{i}}}}{\prod\limits_{j=1,j\neq
i}^{n}(1-\left(  \frac{\lambda_{j}}{\lambda_{i}}\right)  ^{k})}}%
{\sum\limits_{i=1}^{n}\frac{e^{-(\frac{t}{k})^{\lambda_{i}}}}{\prod
\limits_{j=1,j\neq i}^{n}(1-\left(  \frac{\lambda_{j}}{\lambda_{i}}\right)
^{k})}}$ & $\Phi_{Z}(t)=\sum\limits_{i=1}^{n}\frac{\sum\limits_{n=0}^{\infty
}\frac{t^{n}k^{n}}{n!}\Gamma(\frac{1+n}{\lambda_{i}})}{\prod\limits_{j=1,j\neq
i}^{n}(1-\left(  \frac{\lambda_{j}}{\lambda_{i}}\right)  ^{k})}$\\
$E[Z^{h}]=\sum\limits_{i=1}^{n}\frac{k^{h}\Gamma(1+\frac{k}{\lambda_{i}}%
)}{\prod\limits_{j=1,j\neq i}^{n}(1-\left(  \frac{\lambda_{j}}{\lambda_{i}%
}\right)  ^{k})}.$ &
\end{tabular}
\]

\end{corollary}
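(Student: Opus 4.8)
The plan is to treat this Corollary as a direct specialization of Theorem \ref{propw}, which already expresses each characteristic of $Z \sim MHW(k,\lambda_{1},\ldots,\lambda_{n})$ as a $\frac{1}{PW_{i}}$-weighted sum of the corresponding characteristics of the component variables $Y_{i} \sim Weibull(k,\lambda_{i})$. Since Theorem \ref{propw} supplies the structural identities $F_{Z} = \sum_{i} \frac{1}{PW_{i}} F_{Y_{i}}$, $R_{Z} = \sum_{i} \frac{1}{PW_{i}} R_{Y_{i}}$, and likewise for the hazard rate, MGF, and moment of order $h$, nothing further about the mixed construction needs to be established; the only remaining task is to insert the explicit Weibull expressions and the explicit product $PW_{i}$.

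First I would recall the $2$-parameter Weibull characteristics from the Weibull Definition, namely $F_{Y_{i}}(t) = 1 - e^{-(t/k)^{\lambda_{i}}}$, $R_{Y_{i}}(t) = e^{-(t/k)^{\lambda_{i}}}$, $f_{Y_{i}}(t) = \frac{\lambda_{i}}{k}\left(\frac{t}{k}\right)^{\lambda_{i}-1} e^{-(t/k)^{\lambda_{i}}}$, $\Phi_{Y_{i}}(t) = \sum_{m=0}^{\infty} \frac{t^{m} k^{m}}{m!}\Gamma\!\left(\frac{1+m}{\lambda_{i}}\right)$, and $E[Y_{i}^{h}] = k^{h} \Gamma\!\left(1 + \frac{k}{\lambda_{i}}\right)$. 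Substituting each of these into the corresponding line of Theorem \ref{propw}, together with $PW_{i} = \prod_{j=1, j\neq i}^{n}\left(1 - \left(\frac{\lambda_{j}}{\lambda_{i}}\right)^{k}\right)$, reproduces each displayed formula term by term. The hazard-rate expression then follows by forming the quotient of the substituted density sum $\sum_{i} \frac{1}{PW_{i}} f_{Y_{i}}(t)$ over the substituted reliability sum $\sum_{i} \frac{1}{PW_{i}} R_{Y_{i}}(t)$, exactly as prescribed by Corollary \ref{hazard}.

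The only genuine care needed is bookkeeping: one must keep the roles of the shape and scale parameters $k$ and $\lambda_{i}$ consistent between the Weibull Definition and the $MHW$ parameterization fixed during the construction of $Z$, so that the same argument $(t/k)^{\lambda_{i}}$ appears uniformly across the CDF, reliability, density, and hazard formulas. Provided this matching is respected, the substitution is purely mechanical, and no inequality or convergence difficulty arises beyond the formal termwise structure of the Weibull MGF series, which is inherited directly from the component distribution. Hence the main obstacle here is notational rather than mathematical, and the Corollary follows immediately from Theorem \ref{propw} by substitution.
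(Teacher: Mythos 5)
Your proposal is correct and follows exactly the paper's (implicit) argument: the corollary is stated as an immediate consequence of Theorem \ref{propw}, obtained by substituting the two-parameter Weibull characteristics $F_{Y_{i}}$, $R_{Y_{i}}$, $f_{Y_{i}}$, $\Phi_{Y_{i}}$, $E[Y_{i}^{h}]$ and the explicit product $PW_{i}=\prod_{j=1,j\neq i}^{n}\bigl(1-\bigl(\tfrac{\lambda_{j}}{\lambda_{i}}\bigr)^{k}\bigr)$ into the linear-combination identities, with the hazard rate formed as the quotient prescribed by Corollary \ref{hazard}. The paper supplies no additional proof beyond this substitution, so your mechanical term-by-term specialization, including your explicit caution about keeping the roles of $k$ and $\lambda_{i}$ matched to the Definition's $(t/k)^{\lambda}$ convention, is precisely the intended argument.
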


Next, we give some different graphs of PDF and CDF for some values of k and
$\lambda_{i}$, showing the flexibility of adding extra parameters to a new
distribution.%
%TCIMACRO{\FRAME{ftbphFU}{5.1069in}{1.6186in}{0pt}{\Qcb{PDF and CDF of
%different distributions of $MHW(k,\lambda_{1},\lambda_{2},...,\lambda_{n})$}%
%}{}{Figure}{\special{ language "Scientific Word";  type "GRAPHIC";
%maintain-aspect-ratio TRUE;  display "USEDEF";  valid_file "T";
%width 5.1069in;  height 1.6186in;  depth 0pt;  original-width 10.3129in;
%original-height 3.25in;  cropleft "0";  croptop "1";  cropright "1";
%cropbottom "0";  tempfilename '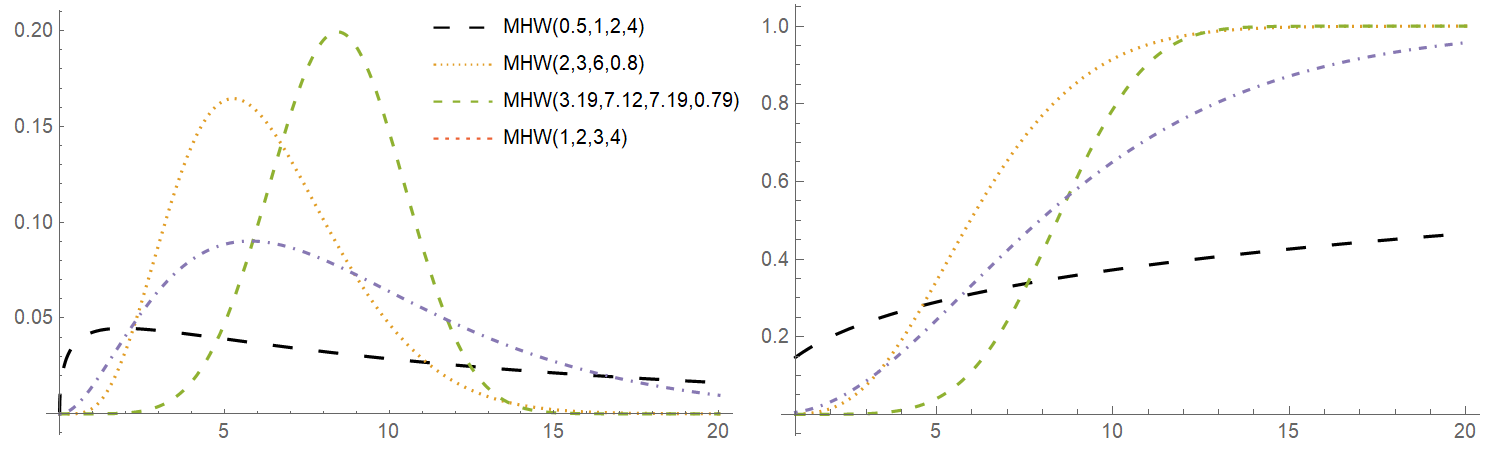';tempfile-properties "XPR";}}}%
%BeginExpansion
\begin{figure}
[ptbh]
\begin{center}
\includegraphics[width=5in]%
{}%
\caption{PDF and CDF of different distributions of $MHW(k,\lambda_{1}%
,\lambda_{2},...,\lambda_{n})$}%
\end{center}
\end{figure}
%EndExpansion

\subsection{The Mixed Hypoexponential Frechet Distribution}

The second distribution of the New Mixed Hypoexponential- G Family.

\begin{theorem}
Given $T\sim hypo\exp(\overrightarrow{\alpha})$, then $f_{T}(t)=\sum
\limits_{i=1}^{n}\frac{f_{X_{i}}(t)}{P_{i}},\ t>0$ where $X_{i}$ $\sim
Exp(\alpha_{i})$. Take $Y_{i}=X_{i}^{-c}\sim Frechet(\frac{1}{c},\alpha
_{i}^{c})$ where $\alpha_{i}>0,\ c>0.$ Then the Mixed Hypoexponential Frechet
distribution $Z,\ $i.e. $Z\sim MHF(k,\lambda_{1},\lambda_{2},...,\lambda
_{n}),\ k,\lambda_{i}>0,$ has PDF
\[
f_{Z}(t)=\sum\limits_{i=1}^{n}\frac{1}{PF_{i}}f_{Y_{i}}(t)
\]
where $Y_{i}$ $\sim Frechet(k,\lambda_{i})$ and $PF_{i}=\prod
\limits_{j=1,j\neq i}^{n}(1-\left(  \frac{\lambda_{i}}{\lambda_{j}}\right)
^{k})$.
\end{theorem}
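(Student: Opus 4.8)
The plan is to mirror the construction already carried out for the Mixed Hypoexponential Weibull distribution, and to invoke the validity theorem for the New Mixed T-G Distribution rather than re-verify from scratch that the resulting PDF integrates to one. By Definition \ref{mix}, the parent $T\sim hypo\exp(\overrightarrow{\alpha})$ is a New Mixed Distribution of the exponentials $X_{i}\sim Exp(\alpha_{i})$ with coefficients $A_{i}=1/P_{i}$ (and $\sum\limits_{i=1}^{n}1/P_{i}=1$, since $T$ is a genuine density). The transform $g(x)=x^{-c}$ with $c>0$ is strictly decreasing, hence $1$-$1$, on $(0,\infty)$, so the validity theorem for the New Mixed T-G Distribution applies verbatim and already guarantees that $\sum\limits_{i=1}^{n}\frac{1}{P_{i}}f_{g(X_{i})}(t)$ is a valid PDF. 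The only genuine work that remains is to identify $g(X_{i})$ as a Frechet law and to re-express the coefficient $P_{i}$ in the Frechet parameters.

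First I would compute the law of $Y_{i}=X_{i}^{-c}$ by the change-of-variables formula. Writing $x=y^{-1/c}$ and $\left\vert dx/dy\right\vert =\frac{1}{c}y^{-1/c-1}$, from $f_{X_{i}}(x)=\alpha_{i}e^{-\alpha_{i}x}$ I obtain $f_{Y_{i}}(y)=\frac{\alpha_{i}}{c}y^{-1/c-1}e^{-\alpha_{i}y^{-1/c}}$. Setting $k=\frac{1}{c}$ and $\lambda_{i}=\alpha_{i}^{c}$ --- equivalently $c=\frac{1}{k}$ and $\alpha_{i}=\lambda_{i}^{k}$ --- this becomes $f_{Y_{i}}(y)=k\lambda_{i}^{k}y^{-1-k}e^{-\lambda_{i}^{k}y^{-k}}$, which, after expanding $\frac{k}{\lambda}\left(\frac{t}{\lambda}\right)^{-1-k}e^{-\left(\frac{t}{\lambda}\right)^{-k}}$, is exactly the two-parameter $Frechet(k,\lambda_{i})$ density of the Frechet definition. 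This confirms $Y_{i}=X_{i}^{-c}\sim Frechet(\frac{1}{c},\alpha_{i}^{c})=Frechet(k,\lambda_{i})$.

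Next I would carry the hypoexponential coefficient through the same substitution. Since $\alpha_{i}=\lambda_{i}^{k}$ for every index, $\frac{\alpha_{i}}{\alpha_{j}}=\frac{\lambda_{i}^{k}}{\lambda_{j}^{k}}=\left(\frac{\lambda_{i}}{\lambda_{j}}\right)^{k}$, so
\[
P_{i}=\prod\limits_{j=1,j\neq i}^{n}\left(1-\frac{\alpha_{i}}{\alpha_{j}}\right)=\prod\limits_{j=1,j\neq i}^{n}\left(1-\left(\frac{\lambda_{i}}{\lambda_{j}}\right)^{k}\right)=:PF_{i}.
\]
Substituting both identifications into the T-G density $\sum\limits_{i=1}^{n}\frac{1}{P_{i}}f_{g(X_{i})}(t)$ then yields $f_{Z}(t)=\sum\limits_{i=1}^{n}\frac{1}{PF_{i}}f_{Y_{i}}(t)$ with $Y_{i}\sim Frechet(k,\lambda_{i})$, which is the claim.

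The step most worth doing carefully is the reparametrization bookkeeping, not any deep difficulty. Because the transform here is the reciprocal power $X_{i}^{-c}$ rather than the $X_{i}^{c}$ used for Weibull, the relation between $\alpha_{i}$ and $\lambda_{i}$ inverts: one gets $\alpha_{i}=\lambda_{i}^{k}$ instead of $\alpha_{i}=\lambda_{i}^{-k}$, and consequently the ratio inside $PF_{i}$ is $\left(\frac{\lambda_{i}}{\lambda_{j}}\right)^{k}$ rather than the $\left(\frac{\lambda_{j}}{\lambda_{i}}\right)^{k}$ appearing in $PW_{i}$. Keeping the direction of this ratio straight is the one place where an inversion error could slip in; everything else is the direct change of variables above together with an appeal to the already-established T-G validity and coefficient-transfer machinery.
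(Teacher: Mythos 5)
Your proposal is correct and takes essentially the same route as the paper: the paper offers no separate proof for this theorem, instead relying on the template established for the Weibull case (reparametrize the transformed exponential, carry $P_{i}$ through the substitution $\alpha_{i}=\lambda_{i}^{k}$, and assemble the mixed PDF), which is exactly what you do. Your explicit change-of-variables verification that $X_{i}^{-c}\sim Frechet(k,\lambda_{i})$ and your appeal to the T-G validity theorem fill in steps the paper leaves implicit, and you correctly track the inverted ratio $\left(\frac{\lambda_{i}}{\lambda_{j}}\right)^{k}$ in $PF_{i}$ as opposed to $\left(\frac{\lambda_{j}}{\lambda_{i}}\right)^{k}$ in $PW_{i}$.
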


\begin{corollary}
Let $Z\sim MHF(k,\lambda_{1},\lambda_{2},...,\lambda_{n})$ then
\[
f_{Z}(t)=\sum\limits_{i=1}^{n}\frac{1}{PF_{i}}f_{Y_{i}}(t)
\]
where $Y_{i}$ $\sim Frechet(k,\lambda_{i})$, $i=1,2,...,n$ be n random
variables with $\lambda_{i}\neq\lambda_{j}$ and $PF_{i}=\prod
\limits_{j=1,j\neq i}^{n}(1-\left(  \frac{\lambda_{i}}{\lambda_{j}}\right)
^{k}).$ Then according to Theorem \ref{cdf} $Z$ has the following CDF
\[
F_{Z}(t)=\sum\limits_{i=1}^{n}\frac{1}{PF_{i}}F_{Y_{i}}(t)=\frac{e^{-\left(
\frac{t}{\lambda_{i}}\right)  ^{-k}}}{\prod\limits_{j=1,j\neq i}^{n}(1-\left(
\frac{\lambda_{i}}{\lambda_{j}}\right)  ^{k})}%
\]

\end{corollary}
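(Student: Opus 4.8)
The plan is to recognize that this corollary is a direct application of Theorem \ref{cdf} together with the explicit Frechet CDF recorded in the Frechet definition, since $Z\sim MHF(k,\lambda_{1},\ldots,\lambda_{n})$ is by construction a mixed distribution in the sense of Definition \ref{mix}. First I would match the given PDF to the mixed-distribution template: from the preceding theorem we have
\[
f_{Z}(t)=\sum\limits_{i=1}^{n}\frac{1}{PF_{i}}f_{Y_{i}}(t),
\]
which is exactly the form $\sum_{i\in I}A_{i}f_{X_{i}}(t)$ of Definition \ref{mix} with index set $I=\{1,\ldots,n\}$, mixing coefficients $A_{i}=\tfrac{1}{PF_{i}}\in\mathbb{R}$, and component variables $X_{i}=Y_{i}\sim Frechet(k,\lambda_{i})$. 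Because $Z$ is a valid mixed distribution, the hypotheses of Theorem \ref{cdf} are satisfied.

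Next I would invoke Theorem \ref{cdf} verbatim. Since $Z$ is continuous, the theorem yields
\[
F_{Z}(t)=\sum\limits_{i=1}^{n}\frac{1}{PF_{i}}F_{Y_{i}}(t).
\]
This is the first equality in the claimed display and requires no new computation beyond citing the theorem.

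Finally I would substitute the closed form of the component CDF. From the Frechet definition, the $2$-parameter $Frechet(k,\lambda)$ distribution has $F(t)=e^{-(t/\lambda)^{-k}}$, so for each $i$ we have $F_{Y_{i}}(t)=e^{-(t/\lambda_{i})^{-k}}$. Inserting this and writing $PF_{i}=\prod_{j=1,\,j\neq i}^{n}\bigl(1-(\lambda_{i}/\lambda_{j})^{k}\bigr)$ gives
\[
F_{Z}(t)=\sum\limits_{i=1}^{n}\frac{e^{-\left(\frac{t}{\lambda_{i}}\right)^{-k}}}{\prod\limits_{j=1,\,j\neq i}^{n}\bigl(1-\left(\frac{\lambda_{i}}{\lambda_{j}}\right)^{k}\bigr)},
\]
which is the asserted expression. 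There is no genuine obstacle here: the argument is a two-line substitution, and the only point needing care is that the summation $\sum_{i=1}^{n}$ must be retained on the right-hand side (it appears to have been dropped typographically in the displayed statement), so that the final formula is the sum of the $n$ Frechet CDF terms rather than a single $i$-indexed summand.
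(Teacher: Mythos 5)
Your proposal is correct and follows exactly the route the paper intends: the corollary is an immediate application of Theorem \ref{cdf} to the mixed form $f_{Z}(t)=\sum_{i=1}^{n}\frac{1}{PF_{i}}f_{Y_{i}}(t)$, followed by substitution of the $2$-parameter Frechet CDF $F_{Y_{i}}(t)=e^{-(t/\lambda_{i})^{-k}}$. Your observation that the final displayed expression in the statement is missing the summation $\sum_{i=1}^{n}$ is also correct; that is a typographical omission in the paper, and the formula should read $F_{Z}(t)=\sum_{i=1}^{n}\frac{e^{-\left(\frac{t}{\lambda_{i}}\right)^{-k}}}{\prod_{j=1,\,j\neq i}^{n}\left(1-\left(\frac{\lambda_{i}}{\lambda_{j}}\right)^{k}\right)}$.
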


Next, we give some different graphs of PDF and CDF for some values of k and
$\lambda_{i}$, Showing the flexibility of adding extra parameters to a new distribution.%

%TCIMACRO{\FRAME{ftbphFU}{5.3816in}{1.7206in}{0pt}{\Qcb{PDF and CDF of
%different distributions of MHF(k,$\lambda_{1},\lambda_{2},\lambda_{3})$}}%
%{}{Figure}{\special{ language "Scientific Word";  type "GRAPHIC";
%maintain-aspect-ratio TRUE;  display "USEDEF";  valid_file "T";
%width 5.3816in;  height 1.7206in;  depth 0pt;  original-width 15.1671in;
%original-height 4.8196in;  cropleft "0";  croptop "1";  cropright "1";
%cropbottom "0";  tempfilename '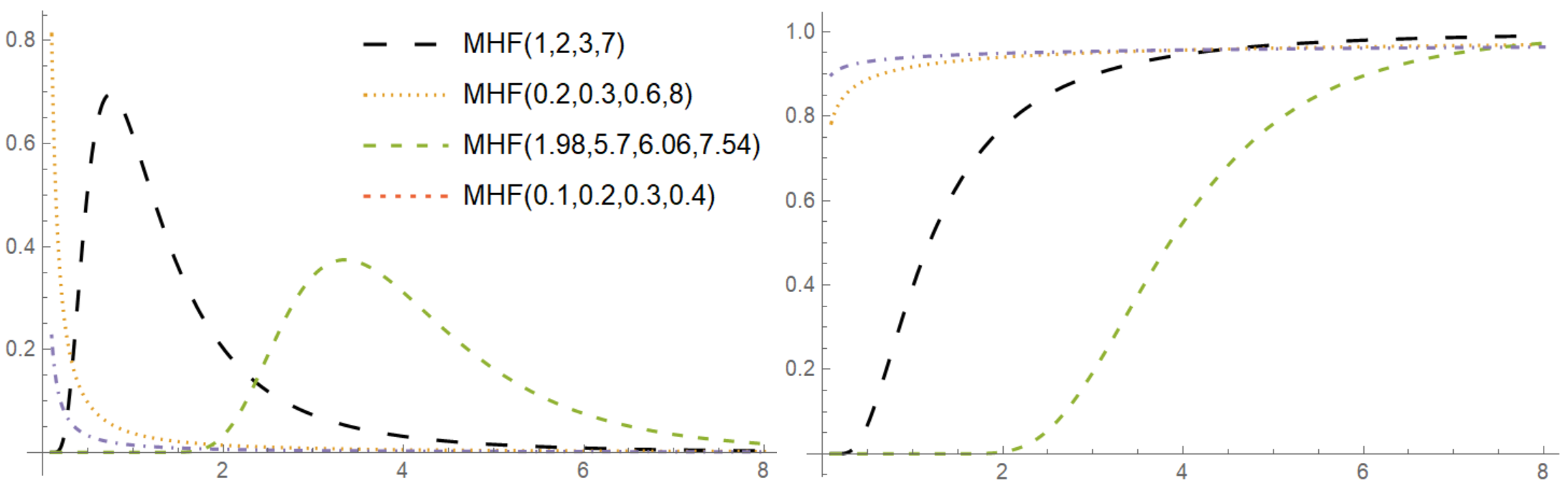';tempfile-properties "XPR";}}}%
%BeginExpansion
\begin{figure}
[ptbh]
\begin{center}
\includegraphics[width=5in
]%
{}%
\caption{PDF and CDF of different distributions of MHF(k,$\lambda_{1}%
,\lambda_{2},\lambda_{3})$}%
\end{center}
\end{figure}
%EndExpansion

\subsection{The Mixed Hypoexponential Pareto Distribution}

\begin{theorem}
Let $T\sim hypo\exp(\overrightarrow{\alpha})$\ then $f_{T}(t)=\sum
\limits_{i=1}^{n}\frac{f_{X_{i}}(t)}{P_{i}},\ t>0$ where $X_{i}$ $\sim
Exp(\alpha_{i})$ and $P_{i}=\prod\limits_{j=1,j\neq i}^{n}(1-\frac{\alpha_{i}%
}{\alpha_{j}})$. Consider $Y_{i}=ce^{X_{i}}\sim Pareto(c,\alpha_{i})$ where
$\alpha_{i}>0,\ c>0.$ Then, our new distribution is the Mixed Hypoexponential
Pareto distribution $Z\sim MHT(k,\lambda_{1},\lambda_{2},...,\lambda_{n})$
with PDF
\[
f_{Z}(t)=\sum\limits_{i=1}^{n}\frac{1}{PT_{i}}f_{Y_{i}}(t)
\]
where $Y_{i}$ $\sim Pareto(k,\lambda_{i})$ and $PT_{i}=\prod\limits_{j=1,j\neq
i}^{n}(1-\frac{\lambda_{i}}{\lambda_{j}}).$
\end{theorem}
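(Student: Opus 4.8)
The plan is to mirror the derivations already carried out for the Weibull and Frechet members of the family: start from the hypoexponential representation of $T$ as a mixed distribution of exponentials, push it through the strictly monotone map $g(x)=ce^{x}$, and then invoke the New Mixed T-G machinery to conclude that the result is a valid PDF of the asserted form.

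First I would recall that, by Theorem \ref{difexp} together with Definition \ref{mix}, the density $f_{T}(t)=\sum_{i=1}^{n}\frac{f_{X_{i}}(t)}{P_{i}}$ exhibits $T$ as a mixed distribution of the independent exponentials $X_{i}\sim Exp(\alpha_{i})$ with coefficients $A_{i}=1/P_{i}$ (these sum to $1$ by the Lemma). Since $g(x)=ce^{x}$ is strictly increasing on $(0,\infty)$, it is a $1$-$1$ transformation, so the theorem guaranteeing that $\sum_{i\in I}A_{i}f_{g(X_{i})}$ is a valid PDF applies directly; this immediately produces a genuine Mixed T-G distribution $Z$ whose density is $\sum_{i=1}^{n}\frac{1}{P_{i}}f_{Y_{i}}(t)$ with $Y_{i}=g(X_{i})=ce^{X_{i}}$.

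Next I would identify the law of each $Y_{i}$ by the standard change-of-variables formula. With $X\sim Exp(\alpha)$ and $y=ce^{x}$ one has $x=\ln(y/c)$ and $\lvert dx/dy\rvert=1/y$, hence for $y>c$
\[
f_{Y}(y)=\alpha e^{-\alpha\ln(y/c)}\cdot\frac{1}{y}=\frac{\alpha c^{\alpha}}{y^{\alpha+1}},
\]
which is exactly the Pareto density of the first kind from the Pareto definition, with support $y>c$. Reading off the parameters and relabeling to match the family's notation (so that the common transform constant $c$ plays the role of the first argument $k$ and the varying exponential rate $\alpha_{i}$ plays the role of $\lambda_{i}$) shows $Y_{i}\sim Pareto(k,\lambda_{i})$.

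Finally I would rewrite the mixing coefficients. Under the identification $\lambda_{i}=\alpha_{i}$ the ratio $\alpha_{i}/\alpha_{j}$ becomes $\lambda_{i}/\lambda_{j}$, so
\[
P_{i}=\prod_{j=1,\,j\neq i}^{n}\Bigl(1-\frac{\alpha_{i}}{\alpha_{j}}\Bigr)=\prod_{j=1,\,j\neq i}^{n}\Bigl(1-\frac{\lambda_{i}}{\lambda_{j}}\Bigr)=:PT_{i},
\]
which delivers the claimed density $f_{Z}(t)=\sum_{i=1}^{n}\frac{1}{PT_{i}}f_{Y_{i}}(t)$. The only genuinely delicate point is the parameter bookkeeping in the last two steps: one must track carefully which of the two Pareto parameters is held common and which varies with $i$, since this is precisely what determines whether the coefficient reduces to $\prod(1-\lambda_{i}/\lambda_{j})$ rather than to $\prod(1-\lambda_{j}/\lambda_{i})$ (contrast the Weibull case, where the power and the inverted ratio $\lambda_{j}/\lambda_{i}$ appear instead). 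Everything else is a direct application of the already-established Mixed T-G properties.
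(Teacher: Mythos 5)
Your proof is correct and takes essentially the same route as the paper: the theorem is stated there without an explicit proof, but the construction spelled out for the Weibull member (transform each exponential component of the hypoexponential, identify the transformed law by change of variables, and rewrite the coefficients $P_{i}$ in the new parameters) is exactly your argument. Your explicit computation $f_{Y}(y)=\frac{\alpha c^{\alpha}}{y^{\alpha+1}}$ for $y>c$ and the identification $k=c$, $\lambda_{i}=\alpha_{i}$ (so that $P_{i}=\prod_{j\neq i}\bigl(1-\frac{\lambda_{i}}{\lambda_{j}}\bigr)=PT_{i}$) correctly fill in the details the paper leaves implicit.
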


\begin{corollary}
Let $Z\sim MHT(k,\lambda_{1},\lambda_{2},...,\lambda_{n}).$ Then according to
Theorem \ref{cdf} the CDF of $Z$ is
\[
F_{Z}(t)=\sum\limits_{i=1}^{n}\frac{1}{PT_{i}}F_{Y_{i}}(t)=\sum\limits_{i=1}%
^{n}\frac{(1-(\frac{\lambda_{i}}{t})^{k})}{\prod\limits_{j=1,j\neq i}%
^{n}(1-\frac{\lambda_{i}}{\lambda_{j}})}%
\]

\end{corollary}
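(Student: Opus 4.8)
The plan is to treat the Mixed Hypoexponential Pareto distribution $Z$ as a particular instance of the New Mixed Distribution of Definition \ref{mix} and then quote Theorem \ref{cdf} verbatim. The preceding theorem already supplies the PDF representation $f_Z(t) = \sum_{i=1}^n \frac{1}{PT_i} f_{Y_i}(t)$, in which $Y_i \sim Pareto(k, \lambda_i)$ and the mixing weights are $A_i = \frac{1}{PT_i}$ with $PT_i = \prod_{j=1, j\neq i}^n (1 - \frac{\lambda_i}{\lambda_j})$. Since this is exactly the linear-combination form demanded by the New Mixed Distribution, Theorem \ref{cdf} applies without any extra hypotheses.

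First I would invoke Theorem \ref{cdf} with $X_i = Y_i$ and $A_i = \frac{1}{PT_i}$, which immediately gives
\[
F_Z(t) = \sum_{i=1}^n \frac{1}{PT_i} F_{Y_i}(t).
\]
Next I would substitute the Pareto CDF recorded in its defining table, namely $F_{Y_i}(t) = 1 - \left(\frac{\lambda_i}{t}\right)^k$, into each summand and insert the explicit product form of $PT_i$, obtaining the asserted closed expression
\[
F_Z(t) = \sum_{i=1}^n \frac{1 - \left(\frac{\lambda_i}{t}\right)^k}{\prod_{j=1, j\neq i}^n \left(1 - \frac{\lambda_i}{\lambda_j}\right)}.
\]

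There is no genuine obstacle here: the argument is a direct application of an already-established theorem followed by the substitution of a known CDF. The only points demanding care are purely clerical — keeping each scale parameter $\lambda_i$ attached to the correct summation index in both numerator and denominator, and observing that the mixing weights are inherited unchanged from the parent Hypoexponential distribution under the $1$-$1$ transformation $Y_i = c e^{X_i}$, so that $\sum_{i=1}^n \frac{1}{PT_i} = 1$. This last identity, which is the reparametrized version of the partial-fraction identity $\sum_i \frac{1}{P_i} = 1$ underlying the Hypoexponential density, guarantees that $F_Z$ has the correct limiting value $1$ as $t \to \infty$ and hence is a bona fide CDF rather than merely a formal linear combination.
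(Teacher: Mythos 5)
Your proposal is correct and matches the paper's own treatment: the corollary is proved there exactly by applying Theorem \ref{cdf} to the mixed representation $f_Z(t)=\sum_{i=1}^{n}\frac{1}{PT_i}f_{Y_i}(t)$ from the preceding theorem and substituting the Pareto CDF $F_{Y_i}(t)=1-\left(\frac{\lambda_i}{t}\right)^{k}$ together with $PT_i=\prod_{j=1,j\neq i}^{n}\left(1-\frac{\lambda_i}{\lambda_j}\right)$. Your closing observation that $\sum_{i=1}^{n}\frac{1}{PT_i}=1$ (the paper's lemma on mixing coefficients) is a harmless extra justification the paper leaves implicit.
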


Here are the graphs of PDF and CDF for some values of k and $\lambda_{i}$,
Showing the flexibility of adding extra parameters to New Mixed
Hypoexponential Pareto distribution.%
%TCIMACRO{\FRAME{ftbphFU}{5.5089in}{1.7058in}{0pt}{\Qcb{PDF\ and CDF of
%different distributions of MHT(k,$\lambda_{1},\lambda_{2},\lambda_{3})$}}%
%{}{Figure}{\special{ language "Scientific Word";  type "GRAPHIC";
%maintain-aspect-ratio TRUE;  display "USEDEF";  valid_file "T";
%width 5.5089in;  height 1.7058in;  depth 0pt;  original-width 15.6531in;
%original-height 4.8196in;  cropleft "0";  croptop "1";  cropright "1";
%cropbottom "0";  tempfilename '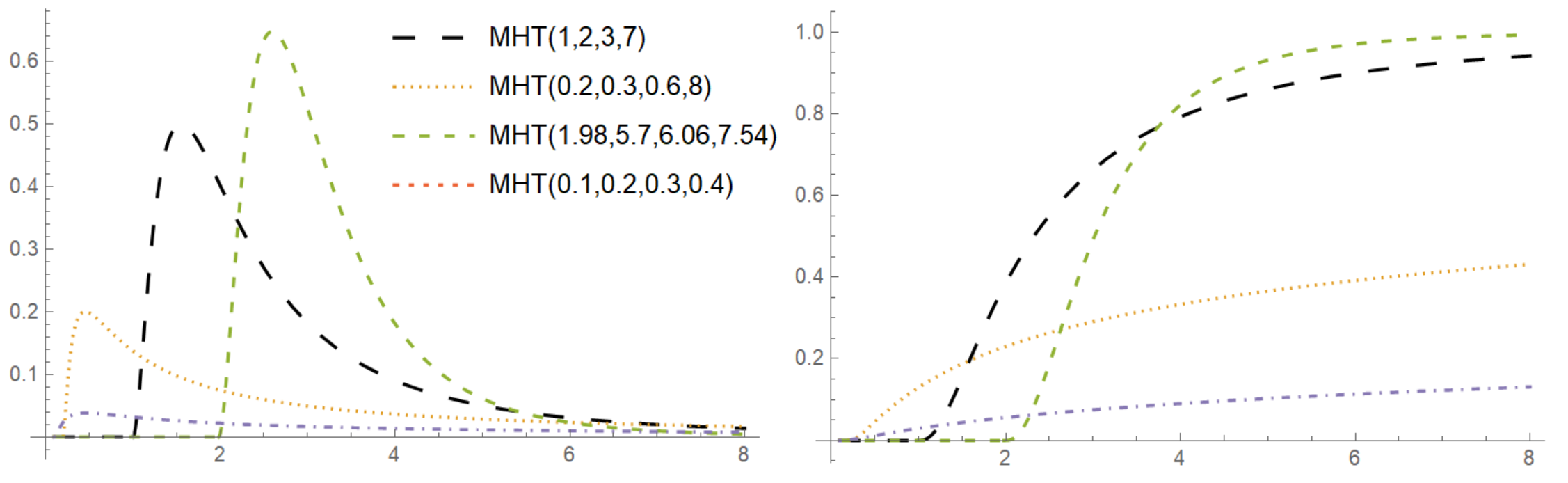';tempfile-properties "XPR";}}}%
%BeginExpansion
\begin{figure}
[ptbh]
\begin{center}
\includegraphics[width=5in]%
{}%
\caption{PDF\ and CDF of different distributions of MHT(k,$\lambda_{1}%
,\lambda_{2},\lambda_{3})$}%
\end{center}
\end{figure}
%EndExpansion

\subsection{The Mixed Hypoexponential Power Distribution}

Fourth is the Mixed Hypoexponential Power distribution.

\begin{theorem}
Let $T\sim hypo\exp(\overrightarrow{\alpha})$, then $f_{T}(t)=\sum
\limits_{i=1}^{n}\frac{f_{X_{i}}(t)}{P_{i}},\ t>0\ $where $X_{i}$ $\sim
Exp(\alpha_{i})$ Consider $Y_{i}=ce^{-X_{i}}\sim Power(\frac{1}{c},\alpha
_{i})\ $where $\alpha_{i}>0,\ c>0,$ domain $\left(  0,c\right)  .$ Then the
Mixed Hypoexponential Power distribution $Z\sim MHP(k,\lambda_{1},\lambda
_{2},...,\lambda_{n})$ has PDF
\[
f_{Z}(t)=\sum\limits_{i=1}^{n}\frac{1}{PP_{i}}f_{Y_{i}}(t)
\]
where $Y_{i}$ $\sim Power(k,\lambda_{i})$ and $PP_{i}=\prod\limits_{j=1,j\neq
i}^{n}(1-\frac{\lambda_{i}}{\lambda_{j}}).$
\end{theorem}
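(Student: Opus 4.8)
The plan is to follow the construction of the New Mixed T-G Family exactly as in the Weibull, Frechet, and Pareto cases. I begin with $T\sim hypo\exp(\overrightarrow{\alpha})$, view its density $f_{T}(t)=\sum_{i=1}^{n}\frac{1}{P_{i}}f_{X_{i}}(t)$ as a New Mixed Distribution of the exponentials $X_{i}\sim Exp(\alpha_{i})$ with coefficients $A_{i}=1/P_{i}$, and then push it forward through the $1$-$1$ map $g(x)=ce^{-x}$. Since $g$ is strictly decreasing on $(0,\infty)$, hence injective, the theorem establishing that $\sum_{i\in I}A_{i}f_{g(X_{i})}(y)$ is a valid PDF applies directly, so it suffices to (i) identify the law of each $Y_{i}=g(X_{i})$ as a Power distribution and (ii) re-express the coefficient $1/P_{i}$ in the new parameters.

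For step (i) I would carry out the change of variables. Writing $y=ce^{-x}$ gives the inverse $x=w(y)=\ln(c/y)$, with $y$ ranging over $(0,c)$ as $x$ ranges over $(0,\infty)$, and Jacobian $\left\vert J\right\vert =\left\vert w^{\prime}(y)\right\vert =1/y$. Substituting into $f_{X_{i}}(x)=\alpha_{i}e^{-\alpha_{i}x}$ yields
\[
f_{Y_{i}}(y)=\alpha_{i}e^{-\alpha_{i}\ln(c/y)}\cdot\frac{1}{y}=\frac{\alpha_{i}}{c^{\alpha_{i}}}\,y^{\alpha_{i}-1},\qquad 0<y<c.
\]
Matching this against the Power PDF $f(t)=\lambda k^{\lambda}t^{\lambda-1}$ forces $\lambda=\alpha_{i}$ and $k^{\lambda}=c^{-\alpha_{i}}$, i.e. $k=1/c$; hence $Y_{i}\sim Power(1/c,\alpha_{i})$ with domain $(0,c)=(0,1/k)$, confirming the transformation claimed in the statement.

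For step (ii) I would reparametrize by setting $k=1/c$ (the common domain parameter) and $\lambda_{i}=\alpha_{i}$ (the individual shape parameters), so that $Y_{i}\sim Power(k,\lambda_{i})$. Under this substitution the hypoexponential coefficient becomes
\[
P_{i}=\prod_{j=1,j\neq i}^{n}\Big(1-\frac{\alpha_{i}}{\alpha_{j}}\Big)=\prod_{j=1,j\neq i}^{n}\Big(1-\frac{\lambda_{i}}{\lambda_{j}}\Big)=PP_{i}.
\]
Feeding $A_{i}=1/P_{i}=1/PP_{i}$ and $f_{g(X_{i})}=f_{Y_{i}}$ into the valid-PDF expression then gives $f_{Z}(t)=\sum_{i=1}^{n}\frac{1}{PP_{i}}f_{Y_{i}}(t)$, which is exactly the asserted density of $Z\sim MHP(k,\lambda_{1},\dots,\lambda_{n})$.

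I do not expect a genuine obstacle here: the argument is a routine application of the already-proved New Mixed T-G machinery, and the only place demanding care is the bookkeeping of the change of variables. Specifically, one must keep the orientation of the decreasing map $g(x)=ce^{-x}$ straight (so that the image interval is $(0,c)$ and the absolute Jacobian is $1/y$) and match the resulting constant $\alpha_{i}/c^{\alpha_{i}}$ correctly to the Power normalization $\lambda k^{\lambda}$, so that the identification $k=1/c$ is clean.
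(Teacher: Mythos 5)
Your proposal is correct and follows essentially the same route as the paper: the paper states this theorem without a separate proof, relying on the general New Mixed T-G construction and the same pattern spelled out for the Weibull case (view the hypoexponential as a mixed distribution of exponentials, transform each $X_{i}$ to the target law, and rewrite $P_{i}$ in the new parameters as $PP_{i}$). Your only addition is the explicit change-of-variables computation showing $ce^{-X_{i}}\sim Power(1/c,\alpha_{i})$, which the paper takes for granted, and that computation is carried out correctly.
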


\begin{corollary}
Let $Z\sim MHP(k,\lambda_{1},\lambda_{2},...,\lambda_{n})$ then CDF of $Z$ is%
\[
F_{Z}(t)=\sum\limits_{i=1}^{n}\frac{1}{PP_{i}}F_{Y_{i}}(t)=\left\{
\begin{tabular}
[c]{ll}%
$\sum\limits_{i=1}^{n}\frac{(kt)^{\lambda_{i}}}{\prod\limits_{j=1,j\neq i}%
^{n}(1-\frac{\lambda_{i}}{\lambda_{j}})}$ & $0<t<\frac{1}{k}$\\
$\sum\limits_{i=1}^{n}\frac{1}{\prod\limits_{j=1,j\neq i}^{n}(1-\frac
{\lambda_{i}}{\lambda_{j}})}$ & $t>\frac{1}{k}$%
\end{tabular}
\right.
\]

\end{corollary}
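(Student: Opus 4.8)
The plan is to apply the general mixed-distribution CDF formula of Theorem \ref{cdf} directly, since by construction $Z\sim MHP(k,\lambda_{1},\ldots,\lambda_{n})$ is a New Mixed Distribution with mixing weights $A_{i}=1/PP_{i}$ and component distributions $Y_{i}\sim Power(k,\lambda_{i})$. First I would invoke Theorem \ref{cdf} with $A_{i}=1/PP_{i}$ to obtain
\[
F_{Z}(t)=\sum\limits_{i=1}^{n}\frac{1}{PP_{i}}F_{Y_{i}}(t),
\]
which is the first equality asserted in the corollary and needs no argument beyond the cited theorem.

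Next I would substitute the CDF of the Power component $Y_{i}\sim Power(k,\lambda_{i})$ recorded in its defining Definition. Because that CDF is piecewise, I would split the analysis into the two regimes $0<t<\frac{1}{k}$ and $t>\frac{1}{k}$. On the region $0<t<\frac{1}{k}$ each term contributes $F_{Y_{i}}(t)=(kt)^{\lambda_{i}}$, producing the upper branch $\sum_{i=1}^{n}(kt)^{\lambda_{i}}/PP_{i}$; on the region $t>\frac{1}{k}$ each component saturates at $F_{Y_{i}}(t)=1$, so the sum collapses to $\sum_{i=1}^{n}1/PP_{i}$, giving the lower branch. Substituting the explicit product $PP_{i}=\prod_{j=1,j\neq i}^{n}(1-\frac{\lambda_{i}}{\lambda_{j}})$ then yields the stated closed form verbatim.

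As a consistency check one may note that on $t>\frac{1}{k}$ the lower branch $\sum_{i}1/PP_{i}$ in fact equals $1$, by the lemma forcing $\sum_{i}A_{i}=1$; this is exactly the value a CDF must return once $t$ leaves the support, and the corollary merely leaves that branch in unsimplified form. The computation is otherwise entirely mechanical, so there is no substantive obstacle. The one point demanding care is the piecewise split: one must confirm that every $Y_{i}$ shares the same domain parameter $k$, so that the branch boundary $t=\frac{1}{k}$ is common to all terms in the sum and does not depend on $i$. This uniformity is guaranteed by the construction $Y_{i}=ce^{-X_{i}}\sim Power(k,\lambda_{i})$ with a single $c=\frac{1}{k}$, which is precisely what makes the piecewise collapse clean and the two regimes well defined.
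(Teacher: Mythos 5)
Your proposal is correct and takes essentially the same route as the paper, which states this corollary without proof as a direct application of Theorem \ref{cdf} with $A_{i}=1/PP_{i}$ followed by substitution of the piecewise Power CDF, including the common branch point $t=\tfrac{1}{k}$ shared by all components. One remark: your value $F_{Y_{i}}(t)=(kt)^{\lambda_{i}}$ agrees with the corollary and with integrating the stated PDF $\lambda_{i}k^{\lambda_{i}}t^{\lambda_{i}-1}$, whereas the paper's Definition of the Power distribution lists $F(t)=(\tfrac{t}{k})^{\lambda}$ --- an internal inconsistency in the paper itself, not a gap in your argument.
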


The following are the graphs of PDF and CDF of Mixed Hypoexponential
distribution for some values of k and $\lambda_{i}$.%
%TCIMACRO{\FRAME{ftbphFU}{5.4514in}{1.7546in}{0pt}{\Qcb{PDF and CDF of
%different distributions of MHP(k,$\lambda_{1},\lambda_{2},\lambda_{3})$}}%
%{}{Figure}{\special{ language "Scientific Word";  type "GRAPHIC";
%maintain-aspect-ratio TRUE;  display "USEDEF";  valid_file "T";
%width 5.4514in;  height 1.7546in;  depth 0pt;  original-width 15.2518in;
%original-height 4.888in;  cropleft "0";  croptop "1";  cropright "1";
%cropbottom "0";  tempfilename '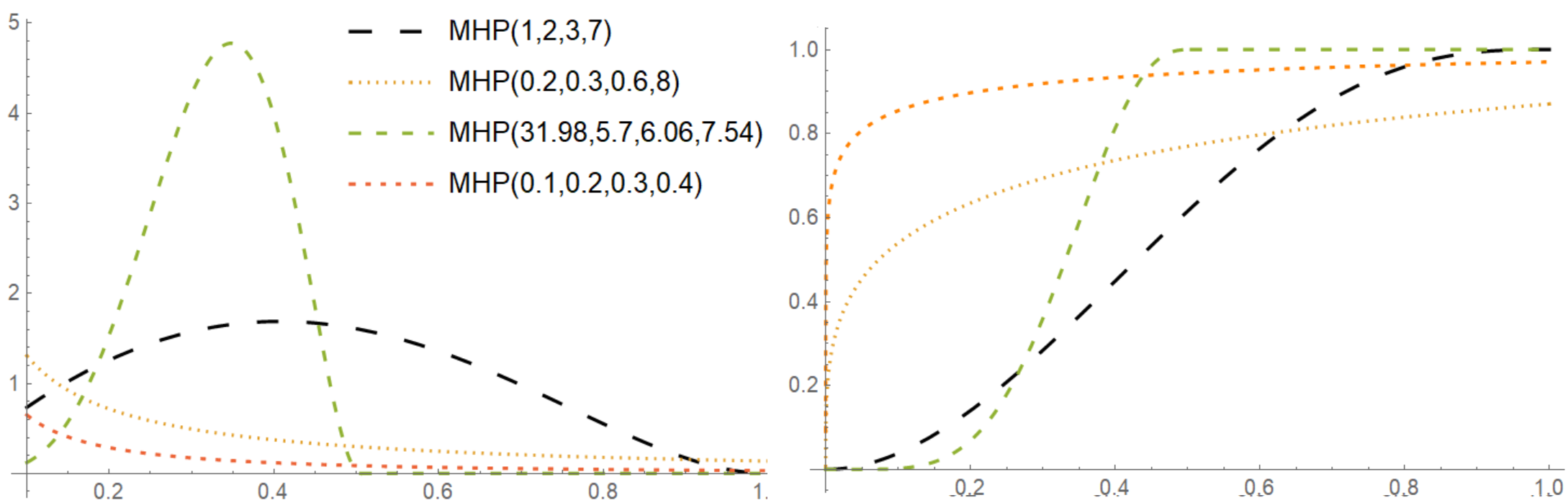';tempfile-properties "XPR";}}}%
%BeginExpansion
\begin{figure}
[ptbh]
\begin{center}
\includegraphics[width=5in
]%
{}%
\caption{PDF and CDF of different distributions of MHP(k,$\lambda_{1}%
,\lambda_{2},\lambda_{3})$}%
\end{center}
\end{figure}
%EndExpansion

\subsection{The Mixed Hypoexponential Gumbel Distribution}

Next is the fifth distribution of the Mixed Hypoexponential - G Family

\begin{theorem}
Let $T\sim hypo\exp(\overrightarrow{\alpha})$, then $f_{T}(t)=\sum
\limits_{i=1}^{n}\frac{f_{X_{i}}(t)}{P_{i}},\ t>0$ Consider $Y_{i}=c\ln
X_{i}\sim Gumbel(-c\ln\left(  \alpha_{i}\right)  ,c)$ where $\alpha_{i}%
>0$,$\ c>0,$ domain $%
%TCIMACRO{\U{211d} }%
%BeginExpansion
\mathbb{R}
%EndExpansion
.\ $Then, the Mixed Hypoexponential Gumbel\textbf{\ }distribution $Z\sim
MHG(k_{1},k_{2},...,k_{n},\lambda)$ has PDF
\[
f_{Z}(t)=\sum\limits_{i=1}^{n}\frac{1}{PG_{i}}f_{Y_{i}}(t)
\]
where $Y_{i}$ $\sim Gumbel(k_{i},\lambda)$ and $PG_{i}=\prod\limits_{j=1,j\neq
i}^{n}(1-e^{\frac{-k_{i}+k_{j}}{\lambda}}).$
\end{theorem}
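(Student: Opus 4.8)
The plan is to reuse the template established for the Weibull, Frechet, Pareto, and Power members of the family: exhibit the Gumbel distribution as a monotone transform of the exponential components $X_i$ of $T\sim hypo\exp(\overrightarrow{\alpha})$, reparametrize the resulting Gumbel parameters, carry that reparametrization through the normalizing product $P_i$, and then invoke the New Mixed T-G Distribution theorem to conclude that the proposed expression is a valid PDF generated from $T$.

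First I would verify the transformation claim, namely that $Y_i=c\ln X_i\sim Gumbel(-c\ln\alpha_i,c)$. Since $c>0$, the map $x\mapsto c\ln x$ is strictly increasing on $(0,\infty)$ with range $\mathbb{R}$, hence one-to-one, so the change-of-variables formula from the New Mixed T-G Distribution theorem applies. Using $X_i\sim Exp(\alpha_i)$ with density $f_{X_i}(x)=\alpha_i e^{-\alpha_i x}$, inverse $x=e^{y/c}$, and Jacobian $\tfrac{1}{c}e^{y/c}$, I would obtain $f_{Y_i}(y)=\tfrac{\alpha_i}{c}e^{y/c}e^{-\alpha_i e^{y/c}}$. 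Matching this against the Gumbel density $\tfrac{1}{\lambda}e^{(y-k)/\lambda-e^{(y-k)/\lambda}}$ from the Gumbel definition, the choice $\lambda=c$ and $k_i=-c\ln\alpha_i$ gives $(y-k_i)/\lambda=y/c+\ln\alpha_i$ and $e^{(y-k_i)/\lambda}=\alpha_i e^{y/c}$, so the Gumbel density collapses exactly onto $f_{Y_i}$, confirming both the transform and the parameter identification.

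Next I would invert the parameter relation to $\alpha_i=e^{-k_i/\lambda}$ and substitute it into $P_i=\prod_{j=1,j\neq i}^{n}(1-\alpha_i/\alpha_j)$. Because $\alpha_i/\alpha_j=e^{(-k_i+k_j)/\lambda}$, this immediately produces $PG_i=\prod_{j=1,j\neq i}^{n}(1-e^{(-k_i+k_j)/\lambda})$, the stated normalizing product. Finally, the Hypoexponential PDF $f_T$ is a New Mixed Distribution of the $X_i$ in the sense of Definition \ref{mix}, with weights $A_i=1/P_i$ that sum to $1$; since $g(x)=c\ln x$ is one-to-one, the New Mixed T-G Distribution theorem guarantees that $\sum_{i=1}^{n}\tfrac{1}{PG_i}f_{Y_i}(t)$ is a valid PDF generated from $T$, and by definition this is $f_Z$ for $Z\sim MHG(k_1,\dots,k_n,\lambda)$, completing the proof.

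The only delicate point is the density matching in the second step: one must use the rate form of the exponential and track the additive $\ln\alpha_i$ carefully so that the location parameter emerges as $-c\ln\alpha_i$ (with the correct sign) rather than $+c\ln\alpha_i$, since this sign is exactly what determines the final shape of $PG_i$. Everything else is routine bookkeeping paralleling the earlier Hypoexponential-G constructions.
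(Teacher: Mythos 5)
Your proposal is correct and follows essentially the same route the paper intends: the paper states this theorem without an explicit proof, relying on the template worked out for the Weibull case (transform the exponential components, reparametrize via $k_i=-c\ln\alpha_i$, $\lambda=c$, rewrite $P_i$ as $PG_i$, and invoke the New Mixed T-G validity theorem), which is precisely your argument. Your change-of-variables verification that $c\ln X_i\sim Gumbel(-c\ln\alpha_i,c)$ under the paper's (min-type) Gumbel density, and the identification $\alpha_i/\alpha_j=e^{(-k_i+k_j)/\lambda}$, correctly supply the details the paper leaves implicit.
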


\begin{corollary}
Let $Z\sim$ Mixed Hypoexponential Gumbel distribution MHG$(k_{1}%
,k_{2},...,k_{n},\lambda)$ then
\[
f_{Z}(t)=\sum\limits_{i=1}^{n}\frac{1}{PG_{i}}f_{Y_{i}}(t)
\]

where $Y_{i}$ $\sim Gumbel(k_{i},\lambda)$, $i=1,2,...,n$ be n random
variables with $k_{i}\neq k_{j}$ and $PG_{i}=\prod\limits_{j=1,j\neq i}%
^{n}(1-e^{\frac{-k_{i}+k_{j}}{\lambda}})$. Then,%
\[
F_{Z}(t)=\sum\limits_{i=1}^{n}\frac{1}{PG_{i}}F_{Y_{i}}(t)=\sum\limits_{i=1}%
^{n}\frac{1}{\prod\limits_{j=1,j\neq i}^{n}(1-e^{\frac{-k_{i}+k_{j}}{\lambda}%
})}e^{-e^{\frac{(t-k_{i})}{\lambda}}}.
\]

\end{corollary}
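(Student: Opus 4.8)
The plan is to treat $Z$ as a New Mixed Distribution in the sense of Definition \ref{mix} and then apply Theorem \ref{cdf} essentially verbatim. The PDF of $Z$ already has the required form $f_Z(t)=\sum_{i=1}^{n}A_i f_{Y_i}(t)$ with mixing coefficients $A_i=\frac{1}{PG_i}$ and component densities $f_{Y_i}$ those of $Y_i\sim Gumbel(k_i,\lambda)$, so the hypotheses of Theorem \ref{cdf} are met once the index set is taken to be $I=\{1,\ldots,n\}$.

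First I would invoke Theorem \ref{cdf} to pass from the PDF to the CDF, yielding
\[
F_Z(t)=\sum_{i=1}^{n}\frac{1}{PG_i}F_{Y_i}(t).
\]
Second, I would substitute the closed form of the Gumbel CDF. From the Gumbel definition, a variable $Y_i\sim Gumbel(k_i,\lambda)$ has CDF $F_{Y_i}(t)=e^{-e^{\frac{t-k_i}{\lambda}}}$; inserting this and recalling $PG_i=\prod_{j=1,j\neq i}^{n}(1-e^{\frac{-k_i+k_j}{\lambda}})$ gives the stated expression
\[
F_Z(t)=\sum_{i=1}^{n}\frac{1}{\prod_{j=1,j\neq i}^{n}(1-e^{\frac{-k_i+k_j}{\lambda}})}\,e^{-e^{\frac{t-k_i}{\lambda}}}.
\]

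There is essentially no obstacle here: the only moves are the application of the general mixed-distribution CDF result and a substitution of the known component CDF, so the corollary is a direct specialization of Theorem \ref{cdf}. If one wished to verify internal consistency, the single nontrivial point would be confirming that the coefficients satisfy $\sum_{i=1}^{n}\frac{1}{PG_i}=1$, as demanded by Definition \ref{mix} and the Lemma on mixing weights; this is the standard partial-fraction identity for hypoexponential coefficients carried through the transformation $Y_i=c\ln X_i$, but it is not required for the CDF computation itself.
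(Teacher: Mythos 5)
Your proposal is correct and matches the paper's own route: the paper offers no separate argument for this corollary beyond exactly what you did, namely viewing $Z$ as a New Mixed Distribution with coefficients $A_i=\frac{1}{PG_i}$, applying Theorem \ref{cdf}, and substituting the Gumbel CDF $F_{Y_i}(t)=e^{-e^{\frac{t-k_i}{\lambda}}}$ as given in the paper's definition. Your closing remark that $\sum_{i=1}^{n}\frac{1}{PG_i}=1$ is a consistency requirement of Definition \ref{mix} rather than a step needed for the computation is also accurate.
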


Next, we give the graphs of PDF and CDF for some values of $k_{i}$ and
$\lambda$ of the New Mixed Hypoexponential Gumbel distribution.%
%TCIMACRO{\FRAME{ftbphFU}{5.3153in}{1.5942in}{0pt}{\Qcb{PDF and CDF of
%different distributions of MHG($k_{1},k_{2},k_{3},\lambda)$}}{}{Figure}%
%{\special{ language "Scientific Word";  type "GRAPHIC";
%maintain-aspect-ratio TRUE;  display "USEDEF";  valid_file "T";
%width 5.3153in;  height 1.5942in;  depth 0pt;  original-width 15.5178in;
%original-height 4.636in;  cropleft "0";  croptop "1";  cropright "1";
%cropbottom "0";  tempfilename '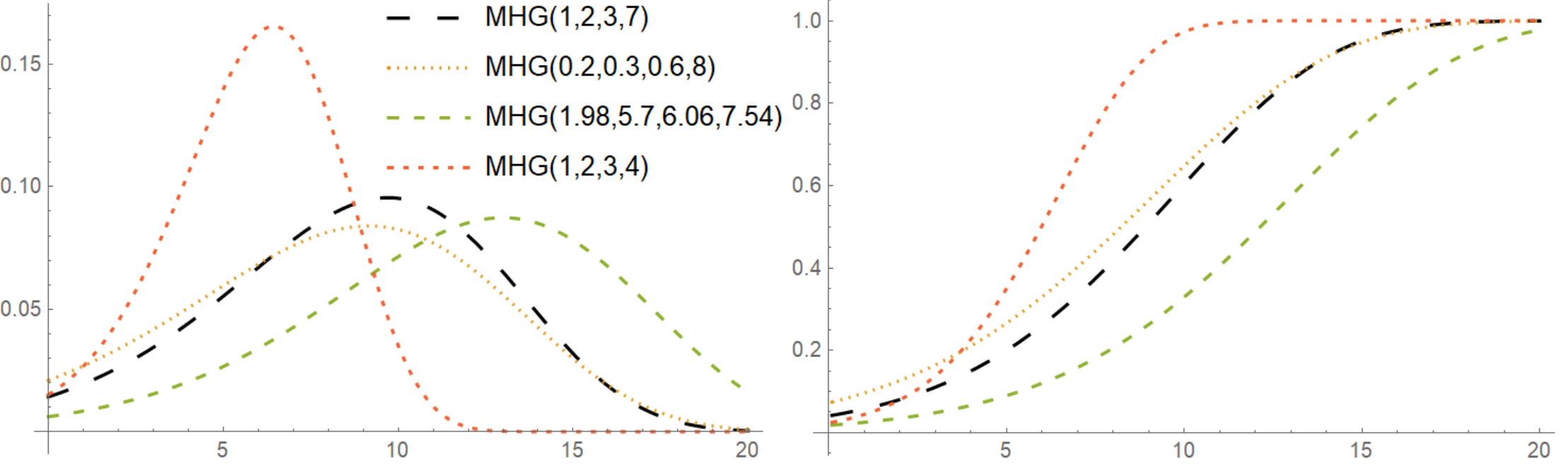';tempfile-properties "XPR";}}}%
%BeginExpansion
\begin{figure}
[ptbh]
\begin{center}
\includegraphics[width=5in]%
{}%
\caption{PDF and CDF of different distributions of MHG($k_{1},k_{2}%
,k_{3},\lambda)$}%
\end{center}
\end{figure}
%EndExpansion

\subsection{The Mixed Hypoexponential Extreme Value Distribution}

Finally, the last distribution of the Mixed Hypoexponential - G Family is The
Mixed Hypoexponential Extreme Value Distribution.

\begin{theorem}
Let $T\sim hypo\exp(\overrightarrow{\alpha})$, then $f_{T}(t)=\sum
\limits_{i=1}^{n}\frac{f_{X_{i}}(t)}{P_{i}},\ t>0$ where $X_{i}$ $\sim
Exp(\alpha_{i})$. Suppose $Y_{i}=c\ln X_{i}\sim ExtremeValue(-c\ln\left(
\alpha_{i}\right)  ,-c)$ where $\alpha_{i}>0,$ $c<0.$ Thus, the Mixed
Hypoexponential Extreme Value\textbf{\ }distribution $Z\sim MHE(k_{1}%
,k_{2},...,k_{n},\lambda)$ is the new distribution with PDF
\[
f_{Z}(t)=\sum\limits_{i=1}^{n}\frac{1}{PE_{i}}f_{Y_{i}}(t)
\]
where $Y_{i}$ $\sim ExtremeValue(k_{i},\lambda)$ and $PE_{i}=\prod
\limits_{j=1,j\neq i}^{n}(1-e^{\frac{k_{i}-k_{j}}{\lambda}}).$
\end{theorem}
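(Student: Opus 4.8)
The plan is to follow the same template already used for the Weibull, Frechet, Pareto, Power, and Gumbel cases in this section, reducing everything to the general Mixed T-G machinery. First I would recall that the Hypoexponential PDF $f_{T}(t)=\sum_{i=1}^{n}\frac{f_{X_{i}}(t)}{P_{i}}$ is precisely a New Mixed Distribution (Definition \ref{mix}) of the exponentials $X_{i}\sim Exp(\alpha_{i})$ with coefficients $A_{i}=1/P_{i}$, and that these coefficients satisfy $\sum_{i=1}^{n}1/P_{i}=1$ by the lemma on mixed coefficients. Since $g(x)=c\ln x$ with $c<0$ is strictly monotone, hence $1$-$1$, on $(0,\infty)$, the earlier validity theorem for Mixed T-G distributions guarantees that $\sum_{i=1}^{n}\frac{1}{P_{i}}f_{g(X_{i})}(t)$ is again a valid PDF. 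It then remains only to identify each $g(X_{i})$ as an Extreme Value variable and to rewrite the products $P_{i}$ in terms of the new parameters.

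Second, I would carry out the change of variables. Writing $X_{i}=e^{Y_{i}/c}$ gives the Jacobian factor $\left\vert \frac{dx}{dy}\right\vert =\frac{1}{-c}e^{y/c}$, using $|c|=-c$ since $c<0$. Pushing the exponential density $f_{X_{i}}(x)=\alpha_{i}e^{-\alpha_{i}x}$ forward yields $f_{Y_{i}}(y)=\frac{\alpha_{i}}{-c}\,e^{y/c}\,e^{-\alpha_{i}e^{y/c}}$. I would then compare this against the Extreme Value PDF $\frac{1}{\lambda}e^{-e^{(-t+k)/\lambda}+(-t+k)/\lambda}$ from its defining definition. Setting $\lambda=-c$ and $k_{i}=-c\ln\alpha_{i}$, one checks that $\frac{-t+k_{i}}{\lambda}=\frac{t}{c}+\ln\alpha_{i}$, so that $e^{(-t+k_{i})/\lambda}=\alpha_{i}e^{t/c}$ and the two expressions coincide term by term; this confirms $Y_{i}\sim ExtremeValue(k_{i},\lambda)$ as claimed.

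Third, I would rewrite the normalizing products. Inverting the reparametrization $k_{i}=\lambda\ln\alpha_{i}$ gives $\alpha_{i}=e^{k_{i}/\lambda}$, whence $\frac{\alpha_{i}}{\alpha_{j}}=e^{(k_{i}-k_{j})/\lambda}$ and therefore $P_{i}=\prod_{j=1,j\neq i}^{n}\bigl(1-\frac{\alpha_{i}}{\alpha_{j}}\bigr)=\prod_{j=1,j\neq i}^{n}\bigl(1-e^{(k_{i}-k_{j})/\lambda}\bigr)=PE_{i}$. Substituting $A_{i}=1/P_{i}=1/PE_{i}$ into the valid PDF from Step 1 produces exactly the stated form $f_{Z}(t)=\sum_{i=1}^{n}\frac{1}{PE_{i}}f_{Y_{i}}(t)$.

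The main obstacle I anticipate is purely the sign bookkeeping forced by $c<0$: the Jacobian factor $\frac{1}{-c}$, the scale $\lambda=-c>0$, and the orientation of the monotone map $g$ all have to be tracked consistently, and it is easy to drop a sign inside the double exponential $e^{(-t+k)/\lambda}$, which would misidentify the Extreme Value distribution as the Gumbel one. Once the signs are pinned down, everything else is a routine specialization of the general Mixed T-G framework, exactly parallel to the preceding five theorems of this section.
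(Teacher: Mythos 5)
Your proposal is correct and follows essentially the same route as the paper: the paper states this theorem without a written proof, but the template it establishes for the Weibull case (view the hypoexponential as a New Mixed Distribution with $A_{i}=1/P_{i}$, invoke the Mixed T-G validity theorem for the $1$-$1$ transform, identify $g(X_{i})$ via change of variables, and reparametrize $P_{i}$ into $PE_{i}$) is exactly what you carry out. Your sign bookkeeping ($\lambda=-c>0$, $k_{i}=-c\ln\alpha_{i}$, Jacobian $\tfrac{1}{-c}e^{y/c}$, and $\alpha_{i}/\alpha_{j}=e^{(k_{i}-k_{j})/\lambda}$) is accurate and in fact supplies details the paper leaves implicit.
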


Same as previous, and referring to Theorem \ref{cdf} as $Z\sim MHE(k_{1}%
,k_{2},...,k_{n}\lambda)$ then
\[
F_{Z}(t)=\sum\limits_{i=1}^{n}\frac{1}{PE_{i}}F_{Y_{i}}(t)=\sum\limits_{i=1}%
^{n}\frac{1}{\prod\limits_{j=1,j\neq i}^{n}(1-e^{\frac{k_{i}-k_{j}}{\lambda}%
})}e^{-e^{-\frac{(t-k_{i})}{\lambda}}}.
\]

and the graphs of the PDF and CDF for different values of $k_{i}$ and
$\lambda$ are as the following
%TCIMACRO{\FRAME{ftbphFU}{5.7025in}{1.7694in}{0pt}{\Qcb{PDF and CDF of
%different distributions of MHE($k_{1},k_{2},k_{3},\lambda)$}}{}{Figure}%
%{\special{ language "Scientific Word";  type "GRAPHIC";
%maintain-aspect-ratio TRUE;  display "USEDEF";  valid_file "T";
%width 5.7025in;  height 1.7694in;  depth 0pt;  original-width 15.7279in;
%original-height 4.8601in;  cropleft "0";  croptop "1";  cropright "1";
%cropbottom "0";  tempfilename '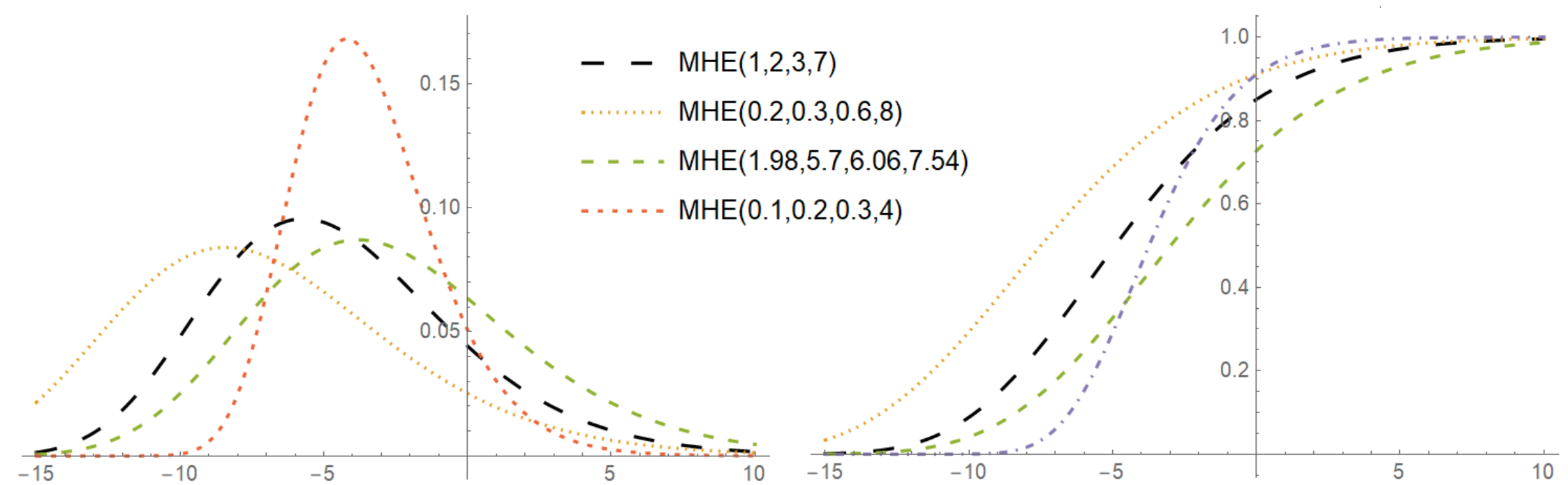';tempfile-properties "XPR";}}}%
%BeginExpansion
\begin{figure}
[ptbh]
\begin{center}
\includegraphics[width=5in
]%
{}%
\caption{PDF and CDF of different distributions of MHE($k_{1},k_{2}%
,k_{3},\lambda)$}%
\end{center}
\end{figure}
%EndExpansion

Finally, we obtain the Mixed Hypoexponential-G family out of 6 different Mixed
T-G distributions. However the properties of each of these distribution that
are the CDF, MGF, hazard rate and reliability functions, and moment of order k
are obtained in the same manner as those of Mixed Hypoexponential Weibull Distribution.

\section{Conclusion}

New Mixed Hypoexponential G-Family is an example of New Mixed T-G Family in
which its distributions are derived by substituting the exponential
distribution in the Hypoexponential distribution by its inverse, scalar
multiple, k-th power, exponential, logarithm, and other transformations.
Distributions belonging to this family take a common general form of PDF which
is the New Mixed distribution form where their CDF, moment generating
function, reliability function hazard rate function, and their parameter
estimation can be determined easily according to the properties of New Mixed
distribution. substitutions in a distribution of the New Mixed form.

\end{document}